\numberwithin{equation}{section}
\tikzset{
    plane max x/.initial=2,
    plane max y/.initial=2,
    plane max z/.initial=2
}
\newtheorem{theorem}{Theorem}[section]
\newtheorem{proposition}[theorem]{Proposition}
\newtheorem{lemma}[theorem]{Lemma}
\theoremstyle{definition}
\newtheorem{definition}[theorem]{Definition}
\newtheorem{example}[theorem]{Example}
\newtheorem{remark}[theorem]{Remark}
\newtheorem{notation}[theorem]{Notation}
\newtheorem{convention}[theorem]{Convention}
\newcommand\<{\langle}
\newcommand\CC{{\mathbb C}}
\newcommand\NN{{\mathbb N}}
\newcommand\RR{{\mathbb R}}
\newcommand\ZZ{{\mathbb Z}}
\newcommand\sG{{\mathscr G}}
\newcommand\QQ{{\mathbb Q}}
\newcommand\kk{{\mathbbm{k}}}
\newcommand{\tv}{\tilde{v}}
\newcommand\del{\partial}
\newcommand{\Ext}{\operatorname{Ext}}
\newcommand\Horn{{\rm Horn}}
\newcommand\minus{\smallsetminus}
\DeclareMathOperator{\qdeg}{qdeg}
\renewcommand\>{\rangle}
\newcommand\numberthis{\addtocounter{equation}{1}\tag{\theequation}}
\begin{document}
\mbox{}
\title{Primary components of codimension two lattice basis ideals}

\author{Zekiye Sahin Eser}
\address{Department of Mathematics \\
Texas A\&M University \\ College Station, TX 77843.}
\email{sahin@math.tamu.edu}

\author{Laura Felicia Matusevich}
\address{Department of Mathematics \\
Texas A\&M University \\ College Station, TX 77843.}
\email{laura@math.tamu.edu}

\thanks{
The authors were partially supported by NSF Grant DMS 1001763.
}

\subjclass[2010]{Primary: 13F99, 52B20 ; Secondary: 33C70, 20M25}

\begin{abstract}
We provide explicit combinatorial descriptions of the primary
components of codimension two lattice basis ideals. 
As an application, we compute the set of parameters for which a
bivariate Horn system of hypergeometric differential equations is
holonomic. 
\end{abstract}
\maketitle

\section{Introduction}
\label{sec:introduction}

Let $\kk$ be a field.
A binomial is an element of a polynomial ring $\kk[x]=\kk[x_{1},\dots,x_{n}]$
with at most two terms; a \emph{binomial ideal} is one whose generators can be
chosen binomial. The zero sets of binomial ideals are unions of toric
varieties, which makes binomial ideals important in algebraic geometry, and is one
reason that combinatorial methods are very effective for
studying them. Beyond their intrinsic interest, binomial ideals
arise naturally in various contexts, such as combinatorial game theory, algebraic
statistics and dynamics of mass action kinetics, see \cite{Mil11a,
  alg-stat, shiu-sturmfels}, 
for references and details. These ideals are also very
important in the study of hypergeometric 
differential equations, as is shown in~\cite{DMM,BMW}.

Binomial ideals are highly structured. Assuming that the field $\kk$ is
algebraically closed, Eisenbud and Sturmfels 
\cite{ES 98} have shown that the associated primes and primary
components of a binomial ideal can be chosen binomial. In addition,
when $\kk$ is of characteristic zero, a combinatorial description
of the primary components of a binomial ideal was provided by
Dickenstein, Matusevich and Miller in \cite{Dick et al 08}. This
description involves graphs whose vertices are the lattice points in certain
cones in $\RR^n$, and are thus difficult to visualize when the number of 
variables is high. 

If $I$ is a binomial ideal, there exists a (multi)grading of the
polynomial ring that makes $I$ a homogeneous ideal. In fact, it is often the case
that a binomial ideal is given together with a specified grading.
Depending on their behavior with respect to a given grading, the
associated primes and primary components of a binomial ideal
are called \emph{toral} or \emph{Andean} (see
Definition~\ref{def:toral Andean}).
In general, the toral primary components of a binomial ideal are more
easily understood combinatorially than the Andean ones, as their
graphs can actually be drawn in much lower dimension than the number
of variables. 

In this article, we study the primary decomposition of
\emph{codimension two lattice basis ideals,} which have a
prescribed grading (see Definition~\ref{def:lattice basis},
Convention~\ref{conv:AB} and the paragraph before
Definition~\ref{def:qdeg - Andean}).
In this case, it is known that the graphs controlling their toral
primary components have vertices in $\NN^2$, regardless of the
dimension of the ambient polynomial ring. Our goal here is to
understand the combinatorics of the Andean primary components of a
codimension two lattice basis ideal; this is achieved in
Theorem~\ref{thm:component}. To prove this result, we construct an
infinite family of 
graphs with vertices in $\NN^2$, that nevertheless contain enough information to
produce the needed primary components. 

As was mentioned before, binomial ideals, and in particular, their
primary decompositions, play a key role in the study of hypergeometric
functions. Using our new understanding of the Andean components of a
codimension two lattice basis ideal, we are able to compute the
\emph{Andean arrangement} (Definition~\ref{def:qdeg - Andean}) of a system of
hypergeometric differential equations in two variables, which consists
of all the parameters for which such a system possesses a finite
dimensional solution space (Theorem~\ref{thm:Andean arrangement}).


\subsection*{Acknowledgements}
%
We are grateful to Christine Berkesch Zamaere, Thomas Kahle, Ezra
Miller, and Christopher O'Neill 
for interesting conversations while we were working on this project,
and for comments on a previous version of this article.

\section{Graphs associated to matrices}
\label{sec:graphs}

Throughout this article, $\NN$ denotes the monoid $\{0,1,2,\dots\}$.

One of the main aims in \cite{Dick et al 08} is to give a
combinatorial description of the primary components of binomial
ideals. This description involves graphs whose vertices belong to
submonoids of $\ZZ^n$, and more precisely, the connected
components of those graphs (see, for instance,
Theorem~\ref{thm:component for monomial associated prime}). In this
section, we study graphs arising from $2\times 2$ 
integer matrices, whose vertices are elements of $\NN^2$. These graphs
are simpler than those introduced in \cite{Dick et al 08}, but it
turns out that they are sufficient to control the primary components
of the special kind of binomial ideal we are interested in. 

We collect our results on graphs here, since they require no algebraic
preliminaries, and may be of independent combinatorial interest.
In Sections~\ref{sec:three variables} and~\ref{sec:hypergeometric}, we
use these graphs to compute the 
primary components of codimension two lattice basis ideals.

\begin{definition}
\label{def:graphs for matrices}
Let $Q$ be a subset of $\ZZ^n$ and let $M$ be an $n\times m$
integer matrix. We define a graph $G_Q(M)$ whose vertices are the
elements of $Q$, and where two vertices $u,v \in Q$ are connected by an
edge if and only if $u-v$ or $v-u$ is a column of $M$. A connected
component of $G_Q(M)$ is  called \emph{infinite}, if it contains
infinitely many vertices; otherwise it is called \emph{finite}. A
\emph{finite} (or \emph{infinite}) vertex of $G_Q(M)$ is one that
belongs to a finite (or infinite) connected component. If $Q=\NN^n$,
we omit $Q$ from the notation, and write $G(M)$ instead of $G_{\NN^n}(M)$.
\end{definition}

In this article, we consider graphs $G_Q(M)$ where $Q$ is a submonoid
of $\ZZ^n$ such as $\NN^n$ or $\NN^k\times \ZZ^{n-k}$
(for instance, in Proposition~\ref{prop : inf ver of I and inf ver of
  J.}), or a subset of  
$\ZZ^n$, such as $\{ u \in \NN^n \mid \lambda_1 u_1 + \cdots + \lambda_n u_n =
\lambda_0\}$, for fixed given $\lambda_0,\dots,\lambda_n \in \QQ$
(Lemma~\ref{lemma:straightening graphs}), or $\{ u \in \NN^2
\mid u_1 \leq \ell\}$, for fixed given $\ell \in \NN$
(Proposition~\ref{prop : xyz-matrix.}).

We are interested in the connected components of $G_Q(M)$, and in
particular, in determining whether these connected components are
infinite or finite.

Our first result concerns the connected components of $G(M)$, where
$M$ is a $2\times 2$ nonsingular matrix whose rows lie in
non adjacent (also called \emph{opposite}) open quadrants of $\ZZ^2$, and
$Q=\NN^2$ is omitted from the notation.

\begin{proposition}[Lemma~6.5 in~\cite{DMS}]
\label{propo:2x2 toral graphs}
Let $M=[\mu_{ij}]_{i,j \in \{1,2\}} \in \ZZ^{2\times 2}$ of rank two, and assume that
$\mu_{11}, \mu_{12} >0$ and  $\mu_{21},\mu_{22} <0$. Set
\[
\mathscr{R} = 
\left\{ \begin{array}{ll}
\{ u \in \NN^2 \mid u_1 < \mu_{12}, u_2<-\mu_{21} \} \quad & 
\text{if } |\mu_{11}\mu_{22}|>|\mu_{12}\mu_{21}|, \\
\{ u \in \NN^2 \mid u_1 < \mu_{11}, u_2<-\mu_{22} \} \quad & 
\text{if } |\mu_{11}\mu_{22}|<|\mu_{12}\mu_{21}| .
\end{array}
\right.
\]
Every finite connected component of $G(M)$ contains exactly one vertex in
$\mathscr{R}$. In particular, the number of finite connected components of
$G(M)$ is the cardinality of $\mathscr{R}$, which is
$\min(|\mu_{11}\mu_{22}|,|\mu_{12}\mu_{21}|)$. 
\end{proposition}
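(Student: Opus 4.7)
The plan is to reduce to the first case $|\mu_{11}\mu_{22}|>|\mu_{12}\mu_{21}|$, the other being symmetric under swapping the two columns of $M$. Write $c_j$ for the $j$-th column of $M$, set $\Delta:=|\mu_{11}\mu_{22}|-|\mu_{12}\mu_{21}|>0$, and introduce the linear form $\phi(u):=|\mu_{22}|u_1+\mu_{12}u_2$. A direct check gives $\phi(c_1)=\Delta$, $\phi(c_2)=0$, and $\phi\geq 0$ on $\NN^2$; so along any edge of $G(M)$ the value of $\phi$ is preserved (for $\pm c_2$-edges) or changes by $\pm\Delta$ (for $\pm c_1$-edges). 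The structural observation underlying everything that follows is that, within each coset of the sublattice $L:=\ZZ c_1+\ZZ c_2\subset\ZZ^2$, the vertices at a fixed $\phi$-level form a single $c_2$-chain $\{(a+k\mu_{12},\,b+k\mu_{22})\mid 0\leq k\leq k_{\max}\}$, parametrized by its leftmost vertex $(a,b)$ with $0\leq a<\mu_{12}$.

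For existence, I would take a finite component $C$, its top $c_2$-chain $T$ (at $\phi_{\max}(C)$), and the leftmost vertex $u^{\star}=(a,b)$ of $T$. By the chain parametrization $a<\mu_{12}$; and because $T$ is the top chain of $C$, no vertex of $T$ admits a $+c_1$-edge (such an edge would produce a vertex of $C$ at the next $\phi$-level). Since the second coordinate on $T$ is maximized at $u^{\star}$, this forces $b<|\mu_{21}|$, so $u^{\star}\in\mathscr{R}$.

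For uniqueness, suppose $v\in C\cap\mathscr{R}$ with $v\ne u^{\star}$. If $\phi(v)=\phi(u^{\star})$, then $v$ lies on the top chain, on which only one vertex has first coordinate $<\mu_{12}$, forcing $v=u^{\star}$. Otherwise $\phi(v)<\phi(u^{\star})$; the chain $V$ containing $v$ has leftmost vertex $v=(a_V,b_V)$ with $b_V=v_2<|\mu_{21}|$, so $V$ too admits no $+c_1$-edge. Any path in $G(M)$ from $v$ to $u^{\star}$ stays in the coset of $v$ and must at some step raise the $\phi$-level from $\phi(v)$ to $\phi(v)+\Delta$; since each such level of the coset hosts a unique chain, that step would require a direct edge between $V$ and the chain above, which does not exist. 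The same ``no level above $\phi(v)$'' reasoning applied to an arbitrary $v\in\mathscr{R}$ confines its component to the finitely many levels in $[0,\phi(v)]$, each housing a finite chain, so the component is finite.

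The step I expect to be hardest is uniqueness: one must carefully combine the ``unique chain per $\phi$-level per coset'' property with the ``no $+c_1$-edge when $u_2$ is small'' observation to block connectivity between consecutive $\phi$-levels. Once these three pieces (existence, uniqueness, and finiteness for $\mathscr{R}$-vertices) are in place, they establish a bijection between finite components and $\mathscr{R}$, yielding the count $|\mathscr{R}|=\mu_{12}|\mu_{21}|=\min(|\mu_{11}\mu_{22}|,|\mu_{12}\mu_{21}|)$ of finite components.
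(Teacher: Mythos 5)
The paper does not supply a proof of this proposition; it is imported verbatim as Lemma~6.5 of~\cite{DMS}, so there is no in-text argument to compare against. Your blind proof is, however, a correct self-contained argument, and the structure is sound. The key device is the linear functional $\phi(u)=|\mu_{22}|u_1+\mu_{12}u_2$, which is constant along $c_2$-edges, jumps by $\pm\Delta$ along $c_1$-edges, and is nonnegative on $\NN^2$; together with the fact that $L\cap\ker\phi=\ZZ c_2$, this gives the ``one chain per coset per $\phi$-level'' stratification, and the three pieces (existence of an $\mathscr{R}$-vertex at the top chain of a finite component, uniqueness via blocked $+c_1$-edges, and finiteness of the component of any $\mathscr{R}$-vertex) assemble into the required bijection. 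Two small points you may wish to tighten in a write-up: (i) in the uniqueness step the clean formulation is to take the \emph{first} vertex along a hypothetical path from $v$ to $u^\star$ whose $\phi$-value exceeds $\phi(v)$, and observe that its predecessor must lie on the chain $V$ and be joined to it by a $+c_1$-edge, which $V$ forbids (your phrasing ``must at some step raise the level from $\phi(v)$'' is essentially this but slightly elides the possibility of first descending below $\phi(v)$); and (ii) it is worth stating explicitly that a component, being confined to a single coset of $L$ and meeting a chain only in full (consecutive chain points are $c_2$-adjacent), is a disjoint union of whole chains, which is what licenses speaking of ``the top chain $T$ of $C$.'' Neither is a gap; both are routine and your argument already contains the ingredients to fill them.
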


\begin{example}

Let $M = \begin{bmatrix} 
\,\, 1 & \,\,3 \\
-2 & -4
 
\end{bmatrix}$. $G(M)$ has min$(|-4|, |-6|) = 4$ finite connected
components, which are shown in Figure~\ref{fig:toral}.

\begin{figure}[h]
\begin{center}
\begin{tikzpicture}[thick,scale=0.6]
\draw [dotted, gray] (0,0) grid (6,6);
\draw [->] (0,0) -- (0,6);
\draw [thick, ->] (0,0) -- (6,0);
\draw [thick] (1,0) -- (0,2);
\draw [thick] (2,0) -- (1,2);
\draw [thick] (1,2) -- (0,4);
\draw [thick] (3,0) -- (0,4);
\draw [thick] (3,0) -- (2,2);
\draw [thick] (2,2) -- (1,4);
\draw [thick] (1,1) -- (0,3);
\draw [thick] (1,4) -- (0,6);
\draw [thick] (3,2) -- (0,6);
\draw [dotted] (2.5,3.5) -- (3,4);
\node [above] at (0,6) {$z$};
\node [right] at (6,0) {$w$};
\foreach \Point in{((2,0), (1,2), (0,4), (3,0), (2,2), (1,4), (0,6), (3,2)}{
  \node at \Point {\textbullet};
}
\foreach \Point in{(0,0), (1,1), (0,3), (1,0), (0,2), (0,1)}{
  \node at \Point {\ding{83}};
}
\end{tikzpicture}

\end{center}
\caption{The graph of $M$.}
\label{fig:toral}
\end{figure}
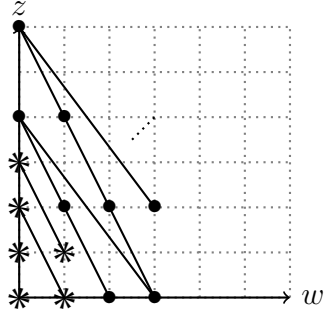
\end{example}

If $M\in \ZZ^{2\times 2}$ has all positive entries, then
$G(M)$ has no finite connected components. However, in this case, it
is not $G(M)$ that carries the algebraic information we need later on, but a
family of subgraphs of $G(M)$, as follows.

\begin{notation}
\label{notation:bands}
Let $M \in \ZZ^{2\times 2}$ of rank two, all of whose entries are
positive. For $\ell \in \NN$ let $Q_\ell = \{ u \in \NN^2 \mid u_1 \leq
\ell\}$. We denote $G_\ell(M) \coloneq G_{Q_\ell}(M)$, and call these graphs
the \emph{band graphs} of $M$. Note that $G_\ell(M)$ is the induced
subgraph of $G(M)$ whose vertices lie in $Q_\ell$, and consequently if 
$\ell \leq \ell'$, then $G_{\ell}(M)$ is a subgraph of $G_{\ell'}(M)$.
\end{notation}

In order to understand the connected components of the band subgraphs
of $G(M)$, we first analyze a special case.

\begin{proposition}
\label{prop : xyz-matrix.}
Consider a rank two integer matrix 
\[ 
M =
\begin{bmatrix}
r & s \\
a & b
\end{bmatrix}
\]
such that $r \geq s >0$, $0 < a \leq b$, and $\gcd(r,s) = 1$.
If $\ell < r+s -1$, then every connected component of $G_{\ell}(M)$ is
finite. If $ \ell \geq r+s-1$, then for every $w_0 \in
\{0,1,\dots,\ell\}$ there exists $z_0 \in \NN$ such that $(w_0,z_0)$
belongs to an infinite component of $G_{\ell}(M)$.
\end{proposition}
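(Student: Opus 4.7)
My approach is to project $G_\ell(M)$ onto the first coordinate. Introduce the auxiliary graph $H_\ell$ on the vertex set $\{0,1,\ldots,\ell\}$ with edges $\{u,v\}$ whenever $|u-v|\in\{r,s\}$. The map $\pi\colon G_\ell(M)\to H_\ell$, $(w,z)\mapsto w$, is a graph homomorphism, and the proposition reduces to the structural dichotomy that $H_\ell$ is a forest precisely when $\ell\leq r+s-2$.

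To prove this dichotomy, count edge types in any closed walk of $H_\ell$: with $p,q,p',q'\in\NN$ denoting the numbers of steps of types $+r,-r,+s,-s$ respectively, the closed condition gives $(p-q)r+(p'-q')s=0$, and $\gcd(r,s)=1$ then forces $(p-q,p'-q')=k(s,-r)$ for some $k\in\ZZ$. For a nontrivial cycle $k\neq 0$, so the length satisfies $p+q+p'+q'\geq|p-q|+|p'-q'|\geq r+s$. Hence a simple cycle uses at least $r+s$ vertices, which is incompatible with $\ell+1<r+s$; so $H_\ell$ is a forest when $\ell\leq r+s-2$. For the converse, when $\ell\geq r+s-1$, the greedy walk starting at $0$ that takes a $+s$-step whenever the result lies in $[0,r+s-1]$ and a $-r$-step otherwise returns to $0$ after exactly $r+s$ steps, visiting every integer of $[0,r+s-1]$; this is a simple cycle in $H_\ell$ of length $r+s$, and translating it to $[c,c+r+s-1]\subseteq[0,\ell]$ yields a simple cycle through any prescribed $w_0\in\{0,\ldots,\ell\}$.

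The two assertions of the proposition follow. When $\ell<r+s-1$, the forest $H_\ell$ has the property that any two walks from $w_0$ to a fixed $w$ traverse each edge the same net number of times, so their lifts to $G_\ell(M)$ end at the same $z$-coordinate; hence the component of $(w_0,z_0)$ injects via $\pi$ into the finite tree containing $w_0$ and has at most $\ell+1$ vertices. When $\ell\geq r+s-1$, take a simple cycle of $H_\ell$ through $w_0$ and lift one traversal starting at $(w_0,z_0)$; since the cycle comprises $s$ moves of type $\pm(r,a)$ and $r$ moves of type $\mp(s,b)$, the lift ends at $(w_0,z_0\pm\det M)$, which is distinct from $(w_0,z_0)$ because $\det M=rb-sa\neq 0$ (rank two). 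Orient the cycle so the $z$-shift is positive, and choose $z_0$ larger than the absolute value of the minimum partial $z$-sum along a single traversal (a constant depending only on $M$); iterating then produces $\{(w_0,z_0+k|\det M|):k\geq 0\}$ inside one component.

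The main obstacle is the sharp combinatorial bound underlying the dichotomy: showing that a nontrivial simple cycle of $H_\ell$ uses at least $r+s$ vertices. This is where the hypothesis $\gcd(r,s)=1$ is essential, and it is what pins down $r+s-1$ as the precise threshold in the proposition. Once the dichotomy is in place, both halves follow from routine tree arguments (finite direction) and a straightforward lifting of the cycle (infinite direction).
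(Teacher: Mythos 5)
Your reduction to the auxiliary graph $H_\ell$ is an attractive idea, and your infinite direction (the greedy Hamiltonian cycle on a window of $r+s$ consecutive integers, lifted to give a net $z$-shift of $\pm\det M$) is essentially correct when $r\neq s$. However, the finiteness direction rests on the assertion that every nontrivial cycle of $H_\ell$ has $k\neq 0$ in the decomposition $(p-q,p'-q')=k(s,-r)$, and that assertion is false as stated: the walk $u\to u+r\to u+r+s\to u+s\to u$ is a simple $4$-cycle of $H_\ell$ with $p=q=p'=q'=1$, i.e. $k=0$ (it appears as soon as $\ell\geq r+s$). What your argument actually requires is that no simple cycle, including such balanced ones, fits inside a window of $r+s-1$ consecutive integers; your length bound $p+q+p'+q'\geq r+s$ says nothing about the $k=0$ cycles, and ruling them out in the relevant range is exactly where the content of the threshold $r+s-1$ (and the hypothesis $\gcd(r,s)=1$) lies. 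The gap is fixable in a few lines: reduce modulo $r+s$. Since $r\equiv -s \pmod{r+s}$, every edge of $H_\ell$ joins residues differing by $\pm s$, and $\gcd(s,r+s)=1$ makes the graph on $\ZZ/(r+s)$ with these edges a single $(r+s)$-cycle; for $\ell\leq r+s-1$ the vertices $0,\dots,\ell$ have distinct residues, so $H_\ell$ embeds into that cycle graph, any simple cycle of $H_\ell$ would have to use all $r+s$ residues, and hence $H_\ell$ is a forest whenever $\ell\leq r+s-2$. As written, though, this decisive step is missing, and it is the crux of the first half of the proposition.

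A smaller point: the case $r=s$ (forced to be $r=s=1$ by $\gcd(r,s)=1$, and explicitly allowed by the statement) escapes your setup entirely. There the projection forgets which column of $M$ was used, so an edge of $H_\ell$ no longer determines the $z$-increment (breaking the forest-lifting argument in principle, though it is only needed for $\ell=0$ where there are no edges), the claimed dichotomy fails ($H_\ell$ is a path, hence a forest, for every $\ell$), and there is no simple cycle of length $r+s=2$; the infinite components for $\ell\geq 1$ instead come from lifting the closed walk $0\to 1\to 0$ using the two \emph{different} columns $(1,a)$ and $(1,b)$, so this case needs a separate (easy) sentence. For comparison, the paper's proof avoids cycle considerations altogether: it follows a component upward through its left turns and uses $\gcd(r,s)=1$ to show that the residues $[\ell r+t-w_0 \bmod s]$ eventually hit $s-1$, so that infinitely many left turns occur exactly when $t=s-1$. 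Your projection-and-lifting scheme is genuinely different and, once the cycle claim is properly proved and the degenerate case handled, would give a clean alternative; but at present the burden of proof sits precisely on the unproved cycle claim.
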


Before proving this result, we introduce some auxiliary notions, and
provide some examples.

Given $M$ as above, the graphs $G_\ell(M)$ have two types of edges:
those parallel to the first column of $M$ are called the
\emph{$r$-edges} of $G_\ell(M)$, and those parallel to the
second column of $M$ are called the \emph{$s$-edges} of $G_\ell(M)$. If $r = s$, we could refer to these edges as $a$-edges and $b$-edges.

\begin{definition} 
\label{def:turns}
Let $M$ be as in Theorem~\ref{prop : xyz-matrix.}, and consider $\NN^2$
with coordinates $w,z$.
A vertex of $G_{\ell}(M)$ is called a \emph{turn} if it is
adjacent to both an $r$-edge and an $s$-edge of $G_\ell(M)$. 
A turn $(w_0,z_0)$ is called a \emph{left turn} if there is a vertex 
adjacent to $(w_0,z_0)$ whose $w$-coordinate is smaller than $w_0$.
Turns that are not left turns are called \emph{right turns}. 
Intuitively, when we walk along a connected component of
$G_{\ell}(M)$ in the direction that increases $z$,
we turn left at a left turn, and right at a right turn.
\end{definition}

\begin{example} 
\label{example:2diml slice graphs}
Let  

\[
  M = \begin{bmatrix} 
\,\, 7 & \,\,4 \\
 \,\,1 & \,\,1 
\end{bmatrix}
 \]
The band graphs $G_{4}(M)$ and $G_{7}(M)$
are illustrated in Figure~\ref{fig:finite slices}.

\begin{figure}[h]
\begin{center}
\begin{minipage}{.4\textwidth}
\begin{tikzpicture}[thick,scale=0.8]
\draw [dotted, gray] (0,0) grid (4,3);
\draw [<->] (0,3) -- (0,0) -- (4,0);
\draw [thick] (0,0) -- (4,1);
\draw [thick] (0,1) -- (4,2);
\draw[thick, dashed] (2,2) -- (2,3);
\node [above] at (0,3) {$z$};
\node [right] at (4,0) {$w$};
\node [above] at (2,-1) {$G_{4}(M)$};
\foreach \Point in{(0,0), (4,1), (4,2), (0,1)}{
  \node at \Point {\textbullet};
}
\end{tikzpicture}
\end{minipage}%
\begin{minipage}{.4\textwidth}
\begin{tikzpicture}[thick,scale=0.8]
\draw [dotted, gray] (0,0) grid (7,3);
\draw [<->] (0,3) -- (0,0) -- (7,0);
\draw [thick] (0,0) -- (7,1);
\draw [thick] (0,0) -- (4,1);
\draw [thick] (3,0) -- (7,1);
\draw [thick] (3,1) -- (7,2);
\draw [thick] (0,1) -- (7,2);
\draw [thick] (0,1) -- (4,2);
\draw[thick, dashed] (4,2) -- (4,3);
\draw [->, dashed] (7,2) -- (8,1.5);
\draw [->, dashed] (7,1) -- (8,1.5);
\node [above] at (0,3) {$z$};
\node [right] at (7,0) {$w$};
\node [above] at (3,-1) {$G_{7}(M)$};
\node [right] at (8,1.5) {left turn};
\foreach \Point in{(0,0), (4,1), (3,1), (7,1), (3,0), (7,2), (0,1), (4,2)}{
  \node at \Point {\textbullet};
}
\end{tikzpicture}
\end{minipage}%
\end{center}
\caption{Examples of band graphs.}
\label{fig:finite slices}
\end{figure}
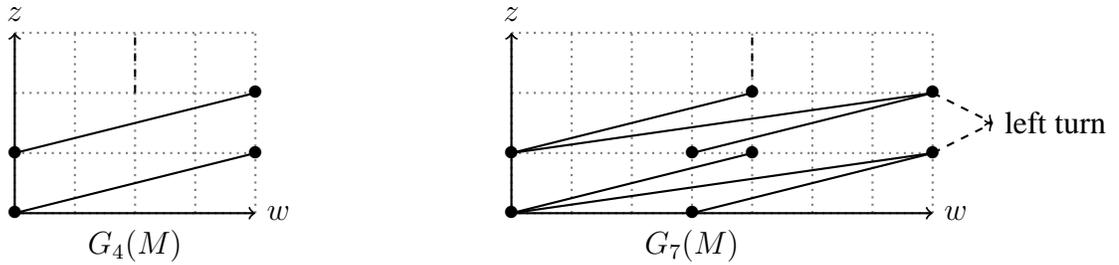

All of the connected components of $G_{4}(M)$ and
$G_{7}(M)$ are finite. The minimum $\ell \in \NN$ such that
$G_{\ell}(M)$ has an infinite connected component is $\ell = 10$
(see Figure~\ref{fig:infinite slice}).

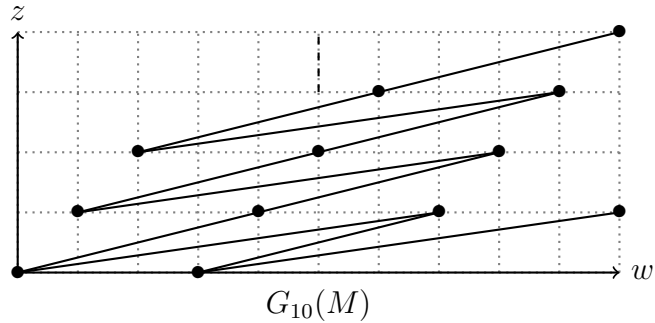
\begin{figure}[h]
\begin{center}
\begin{tikzpicture}[thick,scale=0.8]
\draw [dotted, gray] (0,0) grid (10,4);
\draw [<->] (0,4) -- (0,0) -- (10,0);
\draw [thick] (0,0) -- (7,1);
\draw [thick] (8,2) -- (4,1);
\draw [thick] (0,0) -- (4,1);
\draw [thick] (3,0) -- (7,1);
\draw [thick] (1,1) -- (5,2);
\draw [thick] (1,1) -- (8,2);
\draw [thick] (3,0) -- (10,1);
\draw [thick] (2,2) -- (9,3);
\draw [thick] (6,3) -- (2,2);
\draw [thick] (6,3) -- (10,4);
\draw [thick] (5,2) -- (9,3);
\draw[thick, dashed] (5,3) -- (5,4);
\node [above] at (0,4) {$z$};
\node [right] at (10,0) {$w$};
\node [above] at (5,-1) {$G_{10}(M)$};
\foreach \Point in{(0,0), (4,1), (7,1), (8,2), (3,0), (10,1), (1,1), (2,2), (9,3), (6,3), (10,4), (5,2)}{
  \node at \Point {\textbullet};
}
\end{tikzpicture}
\caption{A band graph with an infinite component.}
\label{fig:infinite slice}
\end{center}
\end{figure}
\end{example}

\begin{proof}[Proof of Proposition~\ref{prop : xyz-matrix.}]
  Write $r = s q_{1} + q_{2}$ where $0\leq q_{2} < s$. We claim that any
  connected component 
  of $G_{r}(M)$ contains at most $2q_{1}+2$ vertices (implying that
  $G_{r}(M)$, and therefore $G_\ell(M)$ for $\ell \leq r$, has no
  infinite connected components). A connected 
  component of $G_{r}(M)$ can only contain one $r$-edge since the
  $w$-coordinates of vertices in $G_{r}(M)$ are bounded by
  $r$. Thus, we can have at most two turns in such a connected
  component. We can connect at most $q_{1}$-many $s$-edges at each
  turn. Including the turns, the number of vertices in a connected
  component of $G_r(M)$ is at most equal to $2q_1+2$. 

  We observe that a modification of the argument above shows that a connected
  component of $G_\ell(M)$ is infinite if and only if it contains
  infinitely many left turns.

  Now consider $G_{r + t}(M)$ where $0 \leq t < s$. We show that
  $G_{r + t}(M)$ has an infinite connected component if and
  only if $t = s - 1$. Note that not all components of $G_{r + t}(M)$
  have left turns, for instance, the vertex $(r+t,0)$ is itself a
  connected component, which therefore has no turns. In what follows,
  we study how many left turns a connected component can have.

  The ordering $\succ$ on the elements of $\NN^2$ defined by
  $(w,z) \succ (w',z')$ if $z > z'$, or $z = z'$ and $w' > w$, induces
  a total ordering on the set of left turns of a given component of $G_{r +
    t}(M)$. 

   Let $C$ be a connected component of $G_{r+t}(M)$, and suppose that
   $(w_0,z_0)$ is a left turn in $C$. We wish to produce the next left
   turn of $C$ according to $\succ$, if it exists. Since
   $(w_0,z_0)$ is a left turn in $C$, we have $(w_0-r,z_0-a) \in
   C$. This is a right turn, 
   because $G_{r + t}(M)$ cannot contain two adjacent $r$-edges, as the
   $w$-coordinates of the vertices of $G_{r + t}(M)$ are bounded by $r
   + t$, and $t<s$. We attach $s$-edges to $(w_0-r,z_0-a)$, to produce
   a vertex $(w_0-r,z_0-a)+ (qs,qb) \in C$, where $q>0$ is as large as
   possible. The integer $q$ is produced by writing $r+t-(w_0-r) =
   qs+[r+t-(w_0-r) \mod s]$, where $[\alpha \mod \beta]$ denotes the
   remainder of $\alpha$ upon division by $\beta$, for $\alpha, \beta
   \in \ZZ$, $\alpha>0$.
 
   If $(w_0-r,z_0-a)+ (qs,qb)$ is
   coordinatewise greater than or equal to $(r,a)$, then 
   $(w_0-r,z_0-a)+ (qs,qb)$ is a left turn of
   $C$ which is greater according to $\succ$ than $(w_0,z_0)$. Now, 
   $z_0-a \geq 0$ and $b \geq a$ imply that $z_0 -a + qb \geq
   a$. Therefore, in order for $(w_0-r,z_0-a)+ (qs,qb)$ to be a left
   turn, we need $r \leq w_0-r+qs = r+t - [r+t-(w_0-r) \mod s]$, or
   equivalently, $t \geq [2r+t - w_0 \mod s]$.

   Replacing $(w_0,z_0)$ by $(w_0-r,z_0-a)+ (qs,qb)$, we see that the
   condition needed for the existence of a left turn which is
   greater according to $\succ$ than  $(w_0-r,z_0-a)+ (qs,qb)$ is
   $t \geq [2r+t - (w_0-r +qs) \mod s] = [3r+t - w_0 \mod s]$.
   
   Continuing in this manner, the existence of infinitely many left
   turns in $C$ is equivalent to requiring $t \geq [\ell r +t -w_0
   \mod s]$ for all $\ell > 0$. However, since 
   $\gcd(r,s)=1$, there exists $\ell> 0$ 
   such that $[(\ell r + t - w_0 \mod s] = s-1$. Therefore, if
   $t<s-1$, $C$ has finitely many left turns, and is finite, and if
   $t=s-1$, $C$ has infinitely many left turns, and is infinite.

   If $t<s-1$, a component of $G_{r+t}(M)$ either has no left turns or
   finitely many left turns, which shows that $G_{r+t}(M)$ has no
   infinite components.

   Let $t=s-1$ and $w_0 \in \{0,\dots, r+s-1\}$. If $w_0\geq r$, then for
   large enough $z_0$, $(w_0,z_0)$ is a vertex of both an $r$- and an
   $s$-edge whose other vertex has lower $z$-coordinate, and is
   therefore a left turn in its connected component, 
   which is thus infinite. If $w_0 < r$, we can choose $z_0$
   sufficiently large such that 
   attaching as many $s$-edges to $(w_0,z_0)$ as possible yields a
   left turn, which implies that the component of $(w_0,z_0)$ is infinite.

   Finally, if $\ell > r+s-1$, for each $0\leq t \leq \ell -(r+s-1)$,
   $G_\ell(M)$ contains as a subgraph the image of $G_{r+s-1}(M)$
   under the translation $(w,z) \mapsto (w+t,z)$. This implies that
   for each $w_0 \in \{0,\dots,\ell\}$, there is $z_0 >0$ such that
   $(w_0,z_0)$ is an infinite vertex of $G_\ell(M)$.
\end{proof}

In the previous statement, we assumed that the entries in the top
column of $M$ were relatively prime. We now remove that assumption.

\begin{theorem}
\label{prop : dilation.} 
Consider a rank two matrix 
\[ 
M =
\begin{bmatrix}
r & s \\
a & b
\end{bmatrix}
\]
where $r, s, a, b$ are positive integers, $r \geq s$, $a \leq b$
and $\gcd(r,s) = d \geq 1$.
The minimal $\ell \in \NN$ such that $G_{\ell}(M)$ has an infinite connected
component is $\ell = r + s - d$. If $0 \leq t < d$ and $w_0 \in
\{0,\dots,r+s-d+t\}$, there exists $z_0$ such that $(w_0,z_0)$ is an
infinite vertex of $G_{r+s-d+ t}(M)$ if and only if $w_0$ is
divisible by $d$.
If $\ell > r+s$, for each $w_0 \in
\{0,\dots,\ell\}$, there exists $z_0$ such that $(w_0,z_0)$ is an
infinite vertex of $G_{\ell}(M)$.
\end{theorem}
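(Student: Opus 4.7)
The natural approach is a rescaling reduction to Proposition~\ref{prop : xyz-matrix.}. Write $r = dr'$ and $s = ds'$ with $\gcd(r',s') = 1$, and form the auxiliary matrix
\[
M' = \begin{bmatrix} r' & s' \\ a & b \end{bmatrix},
\]
which satisfies the hypotheses of Proposition~\ref{prop : xyz-matrix.} (since $r' \geq s' > 0$, $0 < a \leq b$, and $\gcd(r',s')=1$). Its infinite-component threshold is $r'+s'-1$.

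The crucial observation is that both columns of $M$ have first coordinate divisible by $d$, so the residue of $w$ modulo $d$ is constant along each connected component of $G_\ell(M)$. Accordingly, $G_\ell(M)$ splits as a disjoint union of $d$ induced subgraphs $H_0, \dots, H_{d-1}$, where $H_c$ is supported on the vertices $(w,z)$ with $w \equiv c \pmod d$. For each $c \leq \ell$, the map $\phi_c(w,z) = \bigl((w-c)/d,\,z\bigr)$ is a graph isomorphism from $H_c$ onto $G_{m_c}(M')$, where $m_c = \lfloor (\ell-c)/d \rfloor$: the divisibilities $d \mid r$ and $d \mid s$ guarantee that an $r$-edge in $H_c$ corresponds to an $r'$-edge in $G_{m_c}(M')$, and similarly an $s$-edge corresponds to an $s'$-edge.

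With this identification, each of the three claims reduces to comparing $m_c$ against the threshold $r'+s'-1$ and invoking Proposition~\ref{prop : xyz-matrix.}. Since $m_c$ is maximized at $c=0$, the smallest $\ell$ for which some $G_{m_c}(M')$ has an infinite component is $\ell = d(r'+s'-1) = r+s-d$, giving the first claim. For the middle claim, writing $\ell = r+s-d+t$ with $0 \leq t < d$ yields $m_c = r'+s'-1+\lfloor (t-c)/d \rfloor$; Proposition~\ref{prop : xyz-matrix.} applied inside each residue class $H_c$ that meets the threshold then produces, via $w_0 = c + dw_0'$, the infinite vertices of $G_\ell(M)$. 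For the third claim, one checks that $\ell > r+s$ forces $m_c \geq r'+s'-1$ for every $c \in \{0,\dots,d-1\}$, so Proposition~\ref{prop : xyz-matrix.} supplies an infinite vertex in every column $w_0 \in \{0,\dots,\ell\}$.

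The only substantive work is combinatorial bookkeeping: verifying that $\phi_c$ really identifies edges (which is where $d \mid r$ and $d \mid s$ enter) and computing $m_c$ carefully in each regime of $\ell$. Once the residue decomposition and its identification with $G_{m_c}(M')$ are in place, the theorem follows by applying Proposition~\ref{prop : xyz-matrix.} residue class by residue class and collecting the resulting infinite vertices.
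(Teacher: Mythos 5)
Your reduction is essentially the paper's: divide the first row by $d$, split $G_\ell(M)$ into the $d$ residue classes of $w$ modulo $d$, identify each class with a band graph of the coprime matrix $M'$, and invoke Proposition~\ref{prop : xyz-matrix.} class by class. Your band index $m_c=\lfloor(\ell-c)/d\rfloor$ is the correct one (the paper's own proof instead sends every nonzero residue class to $G_{\hat{\ell}-1}(\hat{M})$, which is accurate only when $c>\ell\bmod d$), and the first and third claims do follow from your computation exactly as you say.

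The gap is in the middle claim. For $\ell=r+s-d+t$ your formula gives $m_c=r'+s'-1$ for \emph{every} $c\le t$, so Proposition~\ref{prop : xyz-matrix.} yields infinite vertices in every column of every residue class $c\in\{0,1,\dots,t\}$; for $t\ge 1$ this produces infinite vertices $(w_0,z_0)$ with $w_0\equiv c\pmod d$ and $c\neq 0$, which is incompatible with the asserted equivalence ``infinite vertex exists iff $d\mid w_0$.'' Concretely, take $M=\left[\begin{smallmatrix}4&2\\1&1\end{smallmatrix}\right]$, so $d=2$, $r'+s'-1=2$, and let $t=1$, $\ell=5$: the component of $(1,0)$ in $G_5(M)$ contains $(1,k)$ for all $k$ (via $(1,k)\to(3,k+1)\to(5,k+2)\to(1,k+1)$), so $(1,0)$ is an infinite vertex even though $w_0=1$ is not divisible by $d$. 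Hence your sentence ``Proposition applied inside each residue class $H_c$ that meets the threshold then produces \dots the infinite vertices of $G_\ell(M)$'' cannot deliver the ``only if'' direction of the middle claim: your (correct) bookkeeping actually refutes that claim for $t\ge1$, and matches it only for $t=0$. What your argument genuinely proves is the clean characterization that column $w_0\le\ell$ contains an infinite vertex of $G_\ell(M)$ if and only if $\ell-(w_0\bmod d)\ge r+s-d$; you should state that (or restrict the middle claim to $t=0$) and flag the discrepancy with the theorem as printed, rather than asserting the statement follows. The paper's proof obscures this point precisely because it forces all nonzero residue classes into $G_{\hat{\ell}-1}(\hat{M})$, an off-by-one that your sharper index $m_c$ exposes; the characterization above is also what is actually needed for the monomial ideals appearing in the later results.
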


\begin{example}
\label{ex:bands}
 Let $ M = \begin{bmatrix} 
 2 & 6 \\
1 & 2
 
\end{bmatrix}$. When $\ell = 6$, the band graph $G_{6}(M)$ has an
infinite connected component. However, the vertices $(w, z)$ where $w$ is odd
are finite vertices for all $z$; see Figure~\ref{fig:inf-finite-bands}.

\begin{figure}[h]
\begin{center}
\begin{tikzpicture}[thick,scale=0.6]
\draw [dotted, gray] (0,0) grid (6,6);
\draw [->] (0,0) -- (0,6);
\draw [thick, ->] (0,0) -- (6,0);
\draw [thick] (4,1) -- (2,0);
\draw [thick] (4,1) -- (6,2);
\draw [thick] (0,0) -- (6,2);
\draw [thick] (0,0) -- (2,1);
\draw [thick] (2,1) -- (4,2);
\draw [thick] (4,2) -- (6,3);
\draw [thick] (6,3) -- (0,1);
\draw [thick] (0,1) -- (2,2);
\draw [thick] (2,2) -- (4,3);
\draw [dashed] (3,0) -- (5,1);
\draw [dashed] (1,0) -- (3,1);
\draw [dashed] (3,1) -- (5,2);
\draw [dashed] (1,1) -- (3,2);
\draw [dashed] (3,2) -- (5,3);
\draw [dashed] (1,2) -- (3,3);
\draw [dashed] (3,3) -- (5,4);
\draw [dotted] (3.5,3.5) -- (3.5,4.5);
\node [above] at (0,6) {z};
\node [right] at (6,0) {w};
\foreach \Point in{(4,1), (2,0), (6,2), (0,0), (2,1), (4,2), (6,3), (0,1), (2,2), (4,3)}{
  \node at \Point {\textbullet};
}
\foreach \Point in{(1,0), (1,1), (1,2), (3,0), (3,1), (3,2), (5,0), (5,1), (5,2), (5,4), (3,3), (5,3)}{
  \node at \Point {\ding{83}};
}
\end{tikzpicture}
\caption{The band graph $G_{6}(M)$. \label{fig:inf-finite-bands}}
\end{center}
\end{figure}
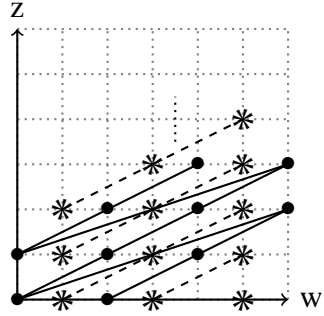

\end{example}

\begin{proof} 
  Let $\hat{M}$ be the (integer) matrix obtained from $M$ by dividing
  $r$ and $s$ by $d$, so that Proposition~\ref{prop : xyz-matrix.}
  applies to the band graphs of $\hat{M}$. 

  Let $\ell \in \NN$ and set $\hat{\ell} = \left \lfloor{\ell/d}
  \right \rfloor $, the integer part of $\ell/d$. We show that $G_\ell(M)$ is a
  disjoint union of graphs isomorphic to $G_{\hat{\ell}}(\hat{M})$ or
  $G_{\hat{\ell}-1}(\hat{M})$. 

  Let $(w_0,z_0) \in \NN^2$ such that $w_0 \leq \ell$, so that
  $(w_0,z_0)$ is a vertex of $G_\ell(M)$. Write
  $w_0 = \hat{w}_0 d + t_0$ where $t_0$ is an integer with $0 \leq t_0
  < d$. 

  If $t_0 = 0$, then $(w_0,z_0)$ belongs to the image of
  the map $\varphi_{\ell,0} : G_{\hat{\ell}}(M) \to G_\ell(M)$
  defined on vertices by $(w,z) \mapsto (dw,z)$. Since $r$ and $s$ are
  divisible by $d$, any vertex in the 
  connected component of $G_\ell(M)$ that contains $(w_0,z_0)$ also
  has its $d$-coordinate divisible by $d$. This implies that the
  connected component of $(w_0,z_0)$ in $G_\ell(M)$ is the image under
  $\varphi_{\ell,0}$ of the connected component of $(w_0/d=\hat{w}_0,z_0)$ in
  $G_{\hat{\ell}}(\hat{M})$.

  If $t_0 > 0$, consider the map  $\varphi_{\ell,t_0} :
  G_{\hat{\ell}-1}(M) \to G_\ell(M)$ defined on vertices by $(w,z) \mapsto
  (dw+t_0,z)$. Since $r$ and $s$ are divisible by $d$, the
  $w$-coordinates of all the vertices of $G_\ell(M)$ connected to
  $(w_0,z_0)$ are congruent to $t_0$ modulo $d$. This implies that the
  connected component of $(w_0,z_0)$ in $G_\ell(M)$ is the image under
  $\varphi_{\ell,t_0}$ of the connected component of $(\hat{w}_0,z_0)$ in
  $G_{\hat{\ell}-1}(\hat{M})$.

  Note that the images of the maps $\varphi_{\ell,i}$ have no common
  vertices, and their union is $G_\ell(M)$. Now use
  Proposition~\ref{prop : xyz-matrix.} to obtain the desired conclusions. 
\end{proof}

We now explain how to construct graphs arising from binomial
ideals. Lemma~\ref{lemma:chain} relates these
graphs and those associated to matrices in the case of lattice basis ideals.

\begin{definition} 
\label{def:graphs}
Let $P$ be a monoid. A binomial ideal $I$ in the monoid ring $\kk[P]$
defines a graph $\sG_P(I)$ whose vertices are the elements of $P$ and
whose edges are pairs $(u,v) \in P\times P$ such that $x^{u}-\rho
x^{v} \in I$ for some $\rho \in \kk \minus \{0\}$. A connected component of 
$\sG_P(I)$ is said to be \emph{infinite} if it consists of infinitely many
vertices; otherwise it is called \emph{finite}. A vertex of $\sG_P(I)$ is
called an \emph{infinite vertex} if it 
belongs to an infinite connected component, otherwise it is called a
\emph{finite vertex}. If $P=\NN^n$, we omit $P$ from the notation, and
write $\sG(I)$ instead of $\sG_P(I)$.
\end{definition} 

Note that if $I \subset \kk[P]$ is a binomial ideal, any connected
component of $\sG_P(I)$ is a complete graph.

We are ready to introduce our main objects of study. 

\begin{definition}
\label{def:lattice basis}
If $\mu \in \ZZ^n$, define $\mu_+,\mu_- \in \NN^n$ via $(\mu_+)_i =
\max(\mu_i,0)$ and $(\mu_-)_i = \max(-\mu_i,0)$, so that $\mu=\mu_+-\mu_-$.
Let $M$ be $n\times m$ integer matrix. We define a binomial ideal
associated to $M$ as follows:
\[
I(M) = \< x^{\mu_+}-x^{\mu_-} \mid \mu \text{ is a column of } M\> \subseteq
\kk[x_1,\dots,x_n] = \kk[\NN^n].
\]
If $M$ has rank $m$, then $I(M)$ is called a \emph{lattice basis ideal}.
\end{definition}

\begin{lemma}
\label{lemma:chain}
Let $M$ be an $n\times m$ integer matrix of rank $m$, and $I(M)$ its
corresponding lattice basis ideal as in
Definition~\ref{def:lattice basis}. Let $\tau\subseteq
\{1,\dots,n\}$ and let $P = \NN^{\tau} \times \ZZ^{\bar{\tau}}$.
Then $u,v \in P$ are connected
in $\sG_P(\kk[P] \cdot I(M))$ if and only if they are connected in $G_P(M)$.
\end{lemma}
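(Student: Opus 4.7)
The plan is to prove the two directions of the equivalence separately, reducing each to the single-edge case and then chaining.

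For the forward direction ($G_P(M)$-connectedness implies $\sG_P$-connectedness), it suffices to show that a single edge of $G_P(M)$ is a single edge of $\sG_P(\kk[P]\cdot I(M))$. If $u,v\in P$ satisfy $u-v=\mu$ for some column $\mu$ of $M$, the key identity is
\[
x^{u}-x^{v} \;=\; x^{v-\mu_-}\bigl(x^{\mu_+}-x^{\mu_-}\bigr),
\]
so I only need to check $v-\mu_- \in P$, i.e.\ $v_i \geq (\mu_-)_i$ for every $i\in\tau$. This follows by cases: if $\mu_i\geq 0$ then $(\mu_-)_i=0$; if $\mu_i<0$ then $v_i = u_i-\mu_i \geq -\mu_i = (\mu_-)_i$ since $u_i\geq 0$ (as $u\in P$). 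Thus $x^u-x^v\in \kk[P]\cdot I(M)$, with $\rho=1$.

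For the reverse direction, the heart of the argument is again to handle a single edge of $\sG_P$. Suppose $x^u-\rho x^v \in \kk[P]\cdot I(M)$ with $\rho\neq 0$. Write this element as a $\kk[P]$-linear combination of the generators $x^{\mu_+}-x^{\mu_-}$, and expand each coefficient as a linear combination of monomials. This produces an expression
\[
x^u - \rho x^v \;=\; \sum_{k} c_k\bigl(x^{p_k}-x^{q_k}\bigr),
\]
where each $p_k,q_k\in P$ and $p_k-q_k$ is a column of $M$; so each pair $\{p_k,q_k\}$ is an edge of $G_P(M)$. Let $\Gamma$ be the subgraph of $G_P(M)$ built from these edges. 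The plan is then to reinterpret the identity above as the boundary (in the simplicial chain complex of $\Gamma$) of the 1-chain $\sum_k c_k[p_k,q_k]$ equaling the 0-chain $[u]-\rho[v]$. Since the image of the boundary map consists of 0-chains whose coefficients sum to zero on each connected component of $\Gamma$, summing over all vertices gives $1-\rho = 0$, forcing $\rho=1$; and the vanishing requirement component-wise forces $u$ and $v$ to lie in the same connected component of $\Gamma$, hence of $G_P(M)$. The case $u=v$ is trivial.

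The main obstacle I foresee is making the flow/boundary argument in the reverse direction fully rigorous while keeping track of the subtleties of $\kk[P]$: the expansion of the coefficients $f_i\in\kk[P]$ as linear combinations of monomials $x^{v_{i,j}}$ must preserve $v_{i,j}+\mu^{(i)}_\pm\in P$, which requires checking that $\NN^\tau\times\ZZ^{\bar\tau}$ is closed under addition of elements of $\NN^n$; this is immediate but must be stated. Once the identity is translated into the chain complex of $\Gamma$, the vanishing-sum-per-component characterization of the image of the boundary map is standard, and the conclusion about connectivity follows. Finally, for arbitrary (not edge-adjacent) vertices $u,v$ connected in one graph, I concatenate the single-edge translations along any connecting path; since each edge of one graph yields a path in the other, connectivity is preserved in both directions.
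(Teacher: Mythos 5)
Your proposal is correct. The forward direction coincides with the paper's argument: prove the single-edge case via $x^u - x^v = x^{v-\mu_-}(x^{\mu_+}-x^{\mu_-})$ with $v-\mu_- \in P$, then chain along paths. For the converse, both proofs begin identically: reduce to a single edge $(u,v)$ of $\sG_P$ (its components are complete graphs), write $x^u - \rho x^v$ as a $\kk[P]$-combination of the generators, expand the coefficients into monomials, and obtain a finite labeled subgraph $\Gamma$ (the paper's $K$) of $G_P(M)$. The difference lies in how the contradiction is reached when $v$ does not lie in the component $K_u$ of $u$. The paper restricts the polynomial expression to the edges of $K_u$, observes that this restricted sum collapses to $x^u$ and still lies in $\kk[P]\cdot I(M)$, and invokes (citing Lemma 7.6 of Miller and Sturmfels) that lattice basis ideals contain no monomials. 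You replace this with the elementary fact that a $0$-chain in the image of the simplicial boundary map of a graph has coefficient sum zero on each connected component; restricting $[u]-\rho[v]$ to $K_u$ then yields $1=0$ directly. Your version is self-contained, avoiding the external citation --- indeed the same conservation argument applied to a hypothetical monomial in the ideal re-proves the no-monomials fact --- at the modest cost of introducing chain-complex vocabulary that the paper avoids.
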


\begin{proof}
Assume that $u,v \in P$ are connected in $G_P(M)$. We show that
$x^u-x^v \in \kk[P]\cdot I(M)$ by 
induction on the length of the path connecting $u$ to $v$. If this
path has length one, then $u$ and $v$ are connected by an edge of
$G_P(M)$, meaning that $u-v$ or $v-u$, say $u-v$, equals a column $\mu$ of
$M$. Then $u - v = \mu = \mu_+ - \mu_-$, so that $v-\mu_- = u-\mu_+ \eqcolon 
\nu$. Since $v-\mu_- + \mu_+ = u \in P$ and for all $i$,
$(\mu_+)_i$ and $(\mu_-)_i$ are not simultaneously nonzero, we see
that $\nu \in P$. But then $x^u - x^v = x^{\nu}(x^{\mu_+} -
x^{\mu_-}) \in \kk[P] \cdot I(M)$, as we wished.

Now assume that $u$ and $v$ are connected in $G_P(M)$ by a path of
length $\ell>1$. This means that there are vertices
$u=\nu^{(0)},\nu^{(1)},\dots,\nu^{(\ell)} 
= v$ of $G_P(M)$ such that $(\nu^{(i)},\nu^{(i+1)})$ is an edge of
$G_P(M)$ for $i=0,\dots, \ell$.
By inductive hypothesis, since $\nu^{(1)}$ and $v$ are connected in
$G_P(M)$ by a path of length $\ell-1$, we have $x^{\nu^{(1)}}-x^v \in
\kk[P]\cdot I(M)$. But we also know $x^u-x^{\nu^{(1)}} \in \kk[P]\cdot
I(M)$. We conclude that
$x^u-x^v \in \kk[P]\cdot I(M)$, and therefore $u$ and $v$ are connected in
$\sG_P(I(M))$.

For the converse, we start by noting that a lattice basis ideal (and
its extension to $\kk[P]$)
contains no monomials. This follows, for instance, from Lemma~7.6
in~\cite{miller-sturmfels}, which implies that the saturation
$(I(M):\<x_1\cdots x_n\>^\infty)\subseteq \kk[x]$ is not the unit ideal.

Since every connected component of $\sG_P(I(M))$ is a complete graph, if
$u$ and $v$ are connected in $\sG_P(I(M))$, then $(u,v)$ is an edge in $\sG_P(I(M))$.
Thus, there exists nonzero $\rho \in \kk$ such that $x^u-\rho x^v \in
\kk[P] \cdot I(M)$, and if $\mu^{(1)}, \dots,\mu^{(m)}$ are the columns of $M$, we
  can write $x^u-\rho x^v = F_1(x) (x^{\mu^{(1)}_+}-x^{\mu^{(1)}_-}) +
  \cdots + F_m(x) (x^{\mu^{(m)}_+}-x^{\mu^{(m)}_-})$ for certain
  $F_1,\dots,F_m \in \kk[P]$. We can represent this expression as a
  subgraph $K$ of $G_P(M)$: for every term $\lambda x^{\nu}$ in $F_i$,
  $K$ contains the edge $(\nu + \mu^{(i)}_+,\nu + \mu^{(i)}_-)$
  and its corresponding vertices. We label this edge by the
  coefficient $\lambda$, and we label each vertex by the combination
  of the labels of the edges adjacent to it, with a positive sign if
  we look at the vertex $\nu + \mu^{(i)}_+$ of $(\nu + \mu^{(i)}_+,\nu
  + \mu^{(i)}_-)$, and a negative sign for the vertex $\nu + \mu^{(i)}_-$.
  Thus, the only two vertices with nonzero labels are $u$ and $v$.

  Let $K_u$ be the connected component of
  $K$ containing $u$. We wish to show that $v$ is a vertex in $K_u$,
  as this implies that $u$ and $v$ are connected in $G_P(M)$. 
  But if this is not the case, we can use $K_u$ to form a
  polynomial expression with a summand $\lambda x^{\nu}
  (x^{\mu^{(i)}_+}-x^{(\mu^{(i)}_-}) $ for each 
  edge $(\nu + \mu^{(i)}_+,\nu + \mu^{(i)}_-)$ labeled by $\lambda$ in
  $K_u$, and this expression equals the sum over the vertices in $K_u$
  of the label of each vertex times the corresponding monomial. Since
  the only vertex with a nonzero label in $K_u$ is $u$ (that label is
  $1$), then we obtain an expression for $x^u$ as a combination of the
  generators of $\kk[P]\cdot I(M)$. This contradicts the fact that
  $\kk[P] \cdot I(M)$ contains
  no monomials. 
\end{proof}

With the hypotheses and notation of the previous result, we see that
we can construct $\sG_P(I(M))$ by adding edges to $G_P(M)$ until
each connected component becomes a complete graph. 

Given an arbitrary binomial ideal $I\subseteq \kk[P]$, it is always
possible to construct a 
subgraph of $\sG_P(I)$ using a generating set of $I$, so that the
underlying vertex sets of their connected components are the same (and
therefore, saturating the connected components of this subgraph with
edges yields $\sG_P(I)$). For this purpose, not every generating set of
$I$ contains sufficient information. What is needed is a generating
set of $I$ that contains all the generators of the maximal
monomial ideal in $I$. The statement (and proof) of this
generalization are more technical, but follow along the same lines as above.
We point out that special cases of this result can be found in the
literature, for example Lemmas~1 and~2 in \cite{MM}.

\section{Primary decomposition of binomial ideals}
\label{sec:background}

In this section we review important facts about
the primary decomposition of binomial ideals, and especially of
lattice basis ideals (Definition~\ref{def:lattice basis}).

Recall that $\NN=\{0,1,2,\dots\}$. We work in the
polynomial ring $\kk[x_1,\dots,x_n]=\kk[\NN^n]$ where $\kk$ is an algebraically
closed field. Unless otherwise specified, $\kk$ is of characteristic zero
(in Section~\ref{sec:hypergeometric}, we use $\kk=\CC$).

The associated primes of lattice basis ideals were studied by
Ho\c{s}ten and Shapiro in \cite{HS00}; they show that the minimal
primes of such an ideal are determined by the sign patterns of
the entries of the corresponding matrix. In this article, we study
lattice basis ideals arising from $n\times 2$ integer matrices, known
as \emph{codimension two lattice basis ideals}.

\begin{convention}
\label{conv:AB}
From now on, $B=[b_{ij}]$ is an $n\times 2$ integer matrix of rank
$2$. The rows of $B$ are denoted by $b_1,\dots,b_n$, and its columns
by $B_1,B_2$. Fix an integer $(n-2)\times n$ matrix $A$ such that
$AB=0$, and whose columns span $\ZZ^{n-2}$ as a lattice. 
\end{convention}

Since $B$ has rank two, the ideal $I(B)$ from
Definition~\ref{def:lattice basis} is a complete intersection.
Therefore all of its associated primes are minimal.
By Corollary~2.1 in~\cite{HS00}, the set of associated
primes of $I(B)$ consists of the associated primes of 
$(I : (\prod^{n}_{i =1}{x_{i}})^{\infty})$ and the monomial primes $\langle
x_{i}, x_{j} \rangle$ if $b_{i}$ and $b_{j}$ lie in opposite open quadrants
of $\ZZ^{2}$.

All of the associated primes of $(I : (\prod^{n}_{i =1}{x_{i}})^{\infty})$
are isomorphic, by rescaling the variables, to
$I_A=\<x^{v_+}-x^{v_-}\mid v \in \ZZ^n, Av=0\>$, where $A$ is as
in Convention~\ref{conv:AB}. The prime ideal $I_A$ is called the
\emph{toric ideal} associated to $A$. It is shown in \cite{ES 98} that
the primary components of $I(B)$ 
corresponding to these associated primes are the associated primes
themselves (since the characteristic of the underlying field $\kk$ is
zero). Thus, we now turn our attention to the primary components 
of $I(B)$ arising from monomial associated primes. 

One of the main results in~\cite{ES 98} is that any binomial prime ideal
in $\kk[x]$ is of the form $\kk[x] \cdot J +\<x_i \mid i \in \sigma\>$,
where $\sigma \subseteq \{1,\dots,n\}$ and $J \subset
\kk[x_j\mid j \notin \sigma]$ is isomorphic to a toric ideal by rescaling
the variables. In characteristic zero, the primary component of a
binomial ideal $I$ corresponding to such an associated prime is of
the form $([I+J]:[\prod_{j \notin \sigma} x_j]^\infty)+
\mathscr{M}$, where $\mathscr{M}$ is a monomial ideal generated by
elements of $\kk[x_i \mid i \in \sigma]$. A key idea from~\cite{Dick et al 08}
is that the graphs from Definition~\ref{def:graphs} can be used to
determine the monomials in the primary components of a binomial
ideal. 
A specific result in this vein is Theorem~\ref{thm:component
  for monomial associated prime} below, which  describes the primary
component of a binomial ideal corresponding to a monomial associated
prime.

We first set up some notation.
Given $\sigma\subseteq \{1,\dots, n\}$, we set $\bar{\sigma} =
\{1,\dots,n\} \minus \sigma$. Denote 
$\NN^{\sigma} = \{ u \in \NN^n \mid u_i = 0 \text{ for } i\notin
\sigma\}$, and  
$\ZZ^{\bar{\sigma}} = \{ u \in \ZZ^n \mid u_i = 0 \text{ for } i \in
\sigma\}$. We consider $P=\NN^\sigma \times \ZZ^{\bar{\sigma}}$ as
  a submonoid of $\ZZ^n$. The corresponding monoid ring is
  $\kk[P]=\kk[\NN^\sigma\times \ZZ^{\bar{\sigma}}] = \kk[x_{j}^{\pm}\mid
  j\notin \sigma ][x_{i}\mid i \in \sigma]$.

\begin{theorem}[Theorem~2.15, \cite{Dick et al 08}]
\label{thm:component for monomial associated prime} 
Let $\kk$ be an algebraically closed field (of any characteristic) and
$I\subset \kk[x]$ a binomial ideal. Let $\sigma \subseteq \{1,\dots,
n\}$, and set $P= \NN^\sigma\times \ZZ^{\bar{\sigma}}$.
If $\<x_i \mid i \in \sigma\>$ is a minimal prime of $I$, its
corresponding primary component is  
\begin{equation}
\label{eqn:component description}
\bigg(I:(\prod_{j\notin \sigma} x_j)^\infty\bigg) + \<x^u \mid u\in \NN^n \text{ is an
  infinite vertex of } \sG_P(\kk[P]\cdot I)\>.
\end{equation}
Moreover, the only monomials in these primary components are those of
the form $x^u$ such that $u \in \NN^n$ is an infinite vertex of
$\sG_P(\kk[P]\cdot I)$. 
\end{theorem}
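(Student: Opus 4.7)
The plan is to prove the equality $C_\mathfrak{p} = J$, where $C_\mathfrak{p}$ denotes the $\mathfrak{p}$-primary component of $I$ for $\mathfrak{p} = \langle x_i \mid i \in \sigma\rangle$, and $J$ is the ideal on the right-hand side of~\eqref{eqn:component description}. Because $\mathfrak{p}$ is a minimal prime, $C_\mathfrak{p}$ is the contraction of $I\kk[x]_\mathfrak{p}$, i.e., $C_\mathfrak{p} = \{f \in \kk[x] : gf \in I \text{ for some } g \notin \mathfrak{p}\}$.

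For $J \subseteq C_\mathfrak{p}$, the saturation $(I : (\prod_{j \notin \sigma} x_j)^\infty)$ lies in $C_\mathfrak{p}$ immediately, since $\prod_{j \notin \sigma} x_j \notin \mathfrak{p}$. For the monomial generators, fix an infinite vertex $u \in \NN^n$ of $\sG_P(\kk[P]\cdot I)$ and pick infinitely many $v \in P$ in its connected component, so that $x^u \equiv \lambda_v x^v \pmod{\kk[P]\cdot I}$ for nonzero scalars $\lambda_v$. The key structural claim is that the $\sigma$-coordinates of these $v$'s are unbounded in $\NN^\sigma$; granting this, $x^v \in \mathfrak{p}^N \kk[P]$ for arbitrarily large $N$, whence $x^u \in \mathfrak{p}^N \kk[P] + \kk[P]\cdot I$ for every $N$. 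Passing to the Noetherian local ring $\kk[x]_\mathfrak{p}/I\kk[x]_\mathfrak{p}$, the Krull intersection theorem forces the class of $x^u$ to vanish, yielding $x^u \in C_\mathfrak{p}$.

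For $C_\mathfrak{p} \subseteq J$ together with the \emph{moreover} statement, I would verify that $J$ is $\mathfrak{p}$-primary. Once this is done, the identity $J = (J\kk[x]_\mathfrak{p}) \cap \kk[x]$ (valid for any $\mathfrak{p}$-primary $J$) combined with $J \supseteq I$ gives $J \supseteq (I\kk[x]_\mathfrak{p}) \cap \kk[x] = C_\mathfrak{p}$. Primarity of $J$ is verified by analyzing $\kk[P]/(\kk[P]\cdot J)$: its $\kk[x_j^\pm \mid j \notin \sigma]$-module structure is spanned by classes of monomials indexed by finite vertices of $\sG_P(\kk[P]\cdot I)$, with the binomial relations collapsing each finite connected component (a complete graph, by the remark following Definition~\ref{def:graphs}) to a single basis element. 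Multiplication by any element outside $\mathfrak{p}\kk[P]$ is injective on this quotient because the $\bar\sigma$-variables are units in $\kk[P]$, so $\mathfrak{p}\kk[P]$ is its unique associated prime; contracting transfers primarity to $\kk[x]/J$. The \emph{moreover} statement now follows: a monomial $x^u$ survives in $\kk[x]/C_\mathfrak{p}$ precisely when its class is nonzero in the described quotient, which happens exactly when $u$ is a finite vertex.

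The main obstacle is the structural claim that infinite connected components of $\sG_P(\kk[P]\cdot I)$ have unbounded $\sigma$-coordinates. My approach is to reduce to the cellular cover of $I$ at $\sigma$, which preserves the $\mathfrak{p}$-primary component and makes the shapes of binomial generators tractable, and then argue by contradiction: a hypothetical infinite component with bounded $\sigma$-coordinates would, after reduction modulo $\mathfrak{p}$, produce an infinite family of monomial equivalences inside $\kk[x_j^\pm \mid j \notin \sigma]$, forcing an associated prime of $I$ strictly below $\mathfrak{p}$ and contradicting its minimality. Making this dictionary between cellular decomposition and graph combinatorics precise is the technical heart of the argument.
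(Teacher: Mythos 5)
First, note that the paper does not prove this statement at all: it is quoted verbatim from \cite{Dick et al 08} (Theorem~2.15 there), so your proposal has to stand on its own. It does not, because the claim you yourself identify as the heart of the argument --- that every infinite connected component of $\sG_P(\kk[P]\cdot I)$ has unbounded $\sigma$-coordinates --- is false, and the contradiction you propose (producing an associated prime of $I$ strictly below $\mathfrak{p}$) cannot materialize. Take $n=2$, $I=\langle x_1x_2-x_1\rangle$, $\sigma=\{1\}$, so $\mathfrak{p}=\langle x_1\rangle$ is a minimal prime and $P=\NN\times\ZZ$. Since $x_1x_2^{k}-x_1x_2^{k'}=x_1x_2^{k'}(x_2^{k-k'}-1)\in\kk[P]\cdot I$, the connected component of $(1,0)$ is $\{(1,k)\mid k\in\ZZ\}$: infinite, with $\sigma$-coordinate identically $1$. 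Reducing modulo $\mathfrak{p}$ kills all of these binomials, so no monomial identities in $\kk[x_2^{\pm}]$ are forced; the prime actually responsible is $\langle x_2-1\rangle$, an associated prime of $I$ \emph{not} contained in $\mathfrak{p}$, so minimality of $\mathfrak{p}$ is never contradicted. Thus the ``cellular cover'' step you call the technical heart is an attempt to prove a false statement.

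The containment you want is still true, but it needs a case split along the lines of the paper's Lemma~\ref{lemma : inf comp.}: an infinite component contains $v,\tilde v$ with $\tilde v-v\in P$. If the $\sigma$-part of $\tilde v-v$ is nonzero, multiplying the binomial $x^{\tilde v}-\lambda x^{v}$ by $x^{k(\tilde v-v)}\in\kk[P]$ shows $\tilde v+k(\tilde v-v)$ stays in the component, the $\sigma$-degrees are unbounded, and your Krull-intersection argument in $\kk[x]_{\mathfrak{p}}/I\kk[x]_{\mathfrak{p}}$ applies. If instead $\tilde v-v$ is supported on $\bar\sigma$ (the situation in the example above), then up to a Laurent-monomial unit $x^{\tilde v}-\lambda x^{v}=x^{v}\bigl(x^{\tilde v-v}-\lambda\bigr)$, and $x^{\tilde v-v}-\lambda$ becomes a unit in $\kk[x]_{\mathfrak{p}}$ because, after clearing denominators, it is a nonzero polynomial in the $\bar\sigma$-variables and hence not in $\mathfrak{p}$; so $x^{v}$, and with it $x^{u}$, lies in $I\kk[x]_{\mathfrak{p}}$ directly. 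Separately, the other half of your plan --- verifying that the right-hand side $J$ is $\mathfrak{p}$-primary --- is where the real content of the cited theorem lives, and your justification (``multiplication by any element outside $\mathfrak{p}\kk[P]$ is injective because the $\bar\sigma$-variables are units'') only addresses monomial multipliers: a general element outside $\mathfrak{p}\kk[P]$ has a nontrivial Laurent-polynomial part, multiplication by the $\bar\sigma$-variables moves vertices between connected components (so the quotient is not free on the set of finite components), and your sketch never uses the hypothesis that $\kk$ is algebraically closed, a sign that the substantive step is being glossed. As written, that is a second gap rather than a routine verification.
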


\begin{remark}
\label{rmk:generated in sigma vars}
Note that the monomial ideal in~\eqref{eqn:component description} is
generated by monomials $x^u$ where $u_j = 0$ if $j \notin
\sigma$. Indeed, if $u \in \NN^n$ is an infinite vertex of
$\sG_P(\kk[P]\cdot I)$, so is $u - \hat{u}$, where $\hat{u}_i=0$ if $i\in
\sigma$ and $\hat{u}_j=u_j$ if $j \notin \sigma$. This is because
monomials in the variables $x_j$ for $j \notin \sigma$ are units in
$\kk[P]$. 
\end{remark}

The following useful criterion to identify the infinite components of 
$\sG_P(\kk[P] \cdot I)$ is a special case of Lemma~2.10 in~\cite{Dick et al 08}.

\begin{lemma}
\label{lemma : inf comp.} 
  Let $I \subseteq k[x_{1},\dots,x_{n}]$ be a binomial ideal, $\sigma
  \subseteq \{1,\dots,n\}$, 
  and $P = \NN^{\sigma}\times \ZZ^{\bar{\sigma}}$.
  A connected 
  component of $\sG_P(\kk[P] \cdot I)$ is infinite if and only if it contains two
  distinct vertices $u, v \in P$ such that $u-v \in P$.
\end{lemma}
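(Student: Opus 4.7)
The plan is to prove the two directions separately, leveraging the observation (recorded immediately after Definition~\ref{def:graphs}) that each connected component of $\sG_P(\kk[P]\cdot I)$ is a complete graph: any two vertices $u,v$ in the same component satisfy $x^u - \rho\,x^v \in \kk[P]\cdot I$ for some $\rho \in \kk\minus\{0\}$.

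For the ``if'' direction, I would start with distinct $u, v$ in the same component with $u - v \in P$, and exploit that $x^{u-v}$ is a unit-or-monomial in $\kk[P]$. Multiplying the relation $x^u - \rho\, x^v \in \kk[P]\cdot I$ by $x^{u-v}$ yields $x^{2u-v} - \rho\, x^u \in \kk[P]\cdot I$, exhibiting an edge between $u$ and $2u-v$. Iterating, every lattice point of the form $u + k(u-v)$ with $k \in \NN$ lies in the same component as $u$. These points are pairwise distinct (since $u \neq v$) and all lie in $P$ (since $P$ is a submonoid of $\ZZ^n$ containing both $u$ and $u-v$), so the component is infinite.

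For the ``only if'' direction, I would consider the projection $\pi_\sigma\colon P \to \NN^\sigma$ onto the $\sigma$-coordinates, applied to an infinite component $C$. If $\pi_\sigma(C)$ is finite, the pigeonhole principle gives distinct $u, v \in C$ with $\pi_\sigma(u) = \pi_\sigma(v)$, whence $u - v$ has zero $\sigma$-coordinates and arbitrary integer $\bar{\sigma}$-coordinates, placing $u - v$ in $P$. If $\pi_\sigma(C)$ is infinite, then Dickson's lemma applied to this infinite subset of $\NN^\sigma$ produces elements $a \neq b$ of $\pi_\sigma(C)$ with $a \leq b$ coordinatewise, and any lifts $u, v \in C$ with $\pi_\sigma(u) = b$, $\pi_\sigma(v) = a$ satisfy $u \neq v$ and $u - v \in P$.

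The main obstacle I anticipate is not any single technical step but rather the temptation to conflate vertices with their $\sigma$-projections: one must take care that the ``only if'' case split into finite-versus-infinite projection truly covers every possibility, and that in the ``if'' direction the iterates $u + k(u-v)$ remain in $P$ (not just in $\ZZ^n$) so that they are genuine vertices of $\sG_P(\kk[P]\cdot I)$ rather than merely lattice points in $\ZZ^n$.
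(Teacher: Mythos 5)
Your argument is correct. Note that the paper itself contains no proof of Lemma~\ref{lemma : inf comp.}: it is invoked as a special case of Lemma~2.10 of \cite{Dick et al 08}, so there is no in-paper argument to compare against, and a self-contained proof like yours is a legitimate substitute. Both halves of your proof are sound. For sufficiency, since $u-v\in P$, the monomial $x^{k(u-v)}$ lies in $\kk[P]$, so multiplying the relation $x^u-\rho x^v\in\kk[P]\cdot I$ by it shows that $u+(k-1)(u-v)$ and $u+k(u-v)$ are joined by an edge; these points stay in $P$ because $P$ is a monoid containing $u$ and $u-v$, and they are pairwise distinct because $u\neq v$, so the component is infinite. (The completeness of connected components that you use to get the initial relation is exactly the observation recorded after Definition~\ref{def:graphs}; alternatively you could translate a whole path from $u$ to $v$ by $u-v$ and avoid it.) For necessity, your case split on the projection $\pi_\sigma(C)\subseteq\NN^\sigma$ is exhaustive: if it is finite, pigeonhole gives two distinct vertices of $C$ with equal $\sigma$-coordinates, whose difference lies in $\{0\}^\sigma\times\ZZ^{\bar\sigma}\subseteq P$; if it is infinite, Dickson's lemma (finitely many minimal elements) yields $a<b$ in the image, and any lifts give distinct $u,v\in C$ with $u-v\in P$, since the $\bar\sigma$-coordinates are unconstrained in $P$. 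The one point worth stating explicitly in a final write-up is the justification of the Dickson step (an infinite subset of $\NN^\sigma$ has an element that is not minimal in it, hence dominates another element of the set), but this is a one-line remark, not a gap.
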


The graphs $\sG_P(\kk[P]\cdot I)$ used in Theorem~\ref{thm:component
  for monomial associated prime} 
have vertices in $P=\NN^{\sigma}\times \ZZ^{\bar{\sigma}}$. When $n$ is
large, we cannot hope to compute the finite 
(or infinite) connected components of $\sG_P(\kk[P]\cdot I)$ by simply drawing
the graph. In certain situations, other graphs, whose ambient lattice
is lower dimensional, contain the same information as
$\sG+P(\kk[P]\cdot I)$. This happens, for instance, when $\<x_i \mid i \in
\sigma\>$ behaves well with respect to a given grading, as follows.

The matrix $A$ in Convention~\ref{conv:AB} can be used to define a
$\ZZ^{n-2}$-grading of $\kk[x]$, where $\deg(x_i)$ is defined to be the $i$th
column of $A$. The ideal $I(B)$ is homogeneous with respect to this
\emph{$A$-grading} (as are therefore its associated primes and primary
components). The associated primes (and primary components) of an
$A$-graded ideal are classified according to their $A$-graded behavior.

\begin{definition}
\label{def:toral Andean}
Let $I \subseteq \kk[x]$ be an $A$-graded binomial ideal, and 
$\mathfrak{p}$ an associated prime of $I$. If the $A$-graded Hilbert
function of $\kk[x]/\mathfrak{p}$ is bounded, then $\mathfrak{p}$ and
the corresponding primary component of $I$, are called
\emph{toral}. Otherwise, they are called \emph{Andean}.
\end{definition}

\begin{example}
\label{ex:codim2 toral Andean}
Among the associated primes of the codimension two lattice basis
ideal $I(B)$, the only Andean ones are the monomial primes $\langle
x_i,x_j\rangle$ such that the corresponding rows of $B$, $b_i$ and
$b_j$, are linearly dependent (and in opposite open quadrants of $\ZZ^2$). 

To see this, first observe that the $A$-graded Hilbert function of
$\kk[x]/I_A$ takes only the values zero and one (and the same holds for the other
non-monomial associated primes of $I(B)$). On the other hand, whether the
Hilbert function of $\kk[x]/\<x_i,x_j\>$ is bounded depends on the rank
of the submatrix of $A$ indexed by $\{1,\dots,n\}\minus \{i,j\}$:
boundedness is equivalent to this submatrix having full rank, and this
happens exactly when $b_i$ and $b_j$ are linearly independent.
\end{example}

The toral components of an $A$-graded binomial ideal are more
accessible combinatorially than the Andean ones in general. The
following result illustrating this fact is a consequence of
Theorem~4.13 from~\cite{Dick et al 08}. 

\begin{theorem}
\label{thm:toral components}
Let $I$ be an $A$-graded binomial ideal in $\kk[x]$, where $k$ is
algebraically closed of characteristic zero, and assume that
$\mathfrak{p}=\<x_i \mid i \in \sigma\>$ is a toral minimal prime of $I$. Define
the binomial ideal
$\bar{I} = I \cdot \kk[x]/\<x_j - 1 \mid j \notin \sigma\> \subset \kk[\NN^{\sigma}]$
by setting $x_j=1$ for $j \notin \sigma$. The $\mathfrak{p}$-primary
component of $I$ is:
\[
\bigg(I:(\prod_{j\notin \sigma} x_j)^\infty\bigg) + \<x^u \mid u\in \NN^{\sigma} \text{ is an
  infinite vertex of } \sG(\bar{I})\>.
\]
\end{theorem}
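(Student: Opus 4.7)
The plan is to deduce this from Theorem~\ref{thm:component for monomial associated prime}, which already expresses the $\mathfrak{p}$-primary component of $I$ as
\[
\bigl(I:(\textstyle\prod_{j\notin\sigma}x_j)^\infty\bigr) + \bigl\langle x^u \mid u \in \NN^n \text{ is an infinite vertex of } \sG_P(\kk[P]\cdot I)\bigr\rangle,
\]
where $P = \NN^\sigma \times \ZZ^{\bar\sigma}$. By Remark~\ref{rmk:generated in sigma vars}, the monomial generators may be chosen with $u \in \NN^\sigma$, so the entire proof reduces to proving the following equivalence: for every $u \in \NN^\sigma$, $u$ is an infinite vertex of $\sG_P(\kk[P]\cdot I)$ if and only if $u$ is an infinite vertex of $\sG(\bar I)$.

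The bridge between the two graphs is the substitution homomorphism $\phi : \kk[P] \to \kk[\NN^\sigma]$ sending $x_j \mapsto 1$ for $j \notin \sigma$, which satisfies $\phi(\kk[P]\cdot I) = \bar I$ and corresponds on monoids to the projection $\pi : P \to \NN^\sigma$. Any edge of $\sG_P(\kk[P]\cdot I)$ with distinct $\pi$-images projects to an edge of $\sG(\bar I)$; conversely, I would show that every edge $x^a - \rho x^b$ of $\sG(\bar I)$ admits a lift to a single binomial $x^{(a,z_a)} - \rho' x^{(b,z_b)} \in \kk[P]\cdot I$. Starting from any preimage $f \in \kk[P]\cdot I$ of $x^a - \rho x^b$ under $\phi$ and decomposing $f$ into $A$-homogeneous pieces (using that $I$ is $A$-graded with $A$-homogeneous binomial generators), one extracts a binomial summand as the required lift. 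Concatenating lifts along a path transfers path connectivity from $\sG(\bar I)$ to $\sG_P(\kk[P]\cdot I)$.

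Using Lemma~\ref{lemma : inf comp.}, each direction of the equivalence becomes a statement about the existence of a pair of distinct connected vertices whose difference lies in the ambient monoid. The direction ``$\sG(\bar I)$-infinite $\Rightarrow$ $\sG_P$-infinite'' follows by lifting the witness pair: if $w_1, w_2 \in \NN^\sigma$ are connected to $u$ in $\sG(\bar I)$ with $w_1 - w_2 \in \NN^\sigma \setminus \{0\}$, then their lifts $\tilde w_1, \tilde w_2 \in P$ connected to $u$ in $\sG_P(\kk[P]\cdot I)$ satisfy $\pi(\tilde w_i) = w_i$, hence $\tilde w_1 - \tilde w_2 \in P$, making $u$ infinite in $\sG_P$. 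For the reverse direction, given distinct $v_1, v_2 \in P$ in the component of $u$ with $v_1 - v_2 \in P$, one needs $\pi(v_1) \neq \pi(v_2)$ to produce a witness in $\NN^\sigma$. In the degenerate ``vertical'' case $\pi(v_1) = \pi(v_2)$, the $A$-homogeneity of connected components of $\sG_P(\kk[P]\cdot I)$ forces $z_1 - z_2 \in \ker A_{\bar\sigma} \setminus \{0\}$. The toral hypothesis $\ker A_{\bar\sigma} \cap \NN^{\bar\sigma} = \{0\}$, combined with pushing the vertical edge through $\phi$, yields $(1-\rho)x^{\pi(v_1)} \in \bar I$: if $\rho \neq 1$ this gives $x^{\pi(v_1)} \in \bar I$, which makes $u$ connected in $\sG(\bar I)$ to infinitely many vertices; the case $\rho = 1$ is handled by combining with adjacent non-vertical edges in the component to produce a genuine non-vertical witness.

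The main obstacle is precisely this elimination of the degenerate ``vertical'' case in the reverse direction, where the toral hypothesis earns its keep; in the non-toral (Andean) case the equivalence fails because such purely $\bar\sigma$-infinite components genuinely arise and are not detected by $\sG(\bar I)$. This technical content is Theorem~4.13 of~\cite{Dick et al 08}, which we invoke to complete the argument.
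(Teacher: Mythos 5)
The paper does not actually supply a proof of this theorem: it is stated as being ``a consequence of Theorem~4.13 from~\cite{Dick et al 08}'' and left at that. Your proposal, after a substantial sketch, ends the same way, by invoking Theorem~4.13 of \cite{Dick et al 08} to ``complete the argument.'' So at bottom the two approaches coincide: both defer to the cited result.

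That said, your intermediate sketch overstates what it actually establishes. The claim that every edge $x^a - \rho x^b$ of $\sG(\bar I)$ lifts to a binomial of $\kk[P]\cdot I$ by ``decomposing $f$ into $A$-homogeneous pieces and extracting a binomial summand'' does not go through as written: the substitution $\phi\colon x_j\mapsto 1$ for $j\notin\sigma$ does \emph{not} respect the $A$-grading, so the images $\phi(f_\alpha)$ of the $A$-graded pieces of a preimage $f$ can cancel massively, and there is no reason for any individual $f_\alpha$ to project onto (a scalar multiple of) $x^a-\rho x^b$. Decomposing instead by $\sigma$-degree is compatible with $\phi$ but does not preserve membership in $\kk[P]\cdot I$. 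This lifting step is therefore a genuine gap, just as much as the degenerate ``vertical'' case you single out. In other words, the hard technical content that you correctly defer to Theorem~4.13 of \cite{Dick et al 08} is broader than ``eliminating the vertical case''; it also covers the compatibility of the graph structure with the projection. Your diagnosis that the toral hypothesis (pointedness of the graded monoid on $\bar\sigma$) is what rules out ``purely $\bar\sigma$-infinite'' components is correct and is a good piece of intuition, but as a self-contained argument the sketch is not complete; since you, like the paper, ultimately lean on the citation, the overall proposal is acceptable at the same level of rigor as the source.
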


The main feature of the previous theorem is that 
$\sG(\bar{I})$ has vertices in $\NN^{\sigma}$, and the
cardinality of $\sigma$ can be 
much smaller than $n$. In the case of codimension two lattice basis
ideals, $\NN^\sigma=\NN^2$, regardless of the number of variables
$n$. In fact, if $\<x_i,x_j\>$ is a toral associated prime of $I(B)$,
then $\overline{I(B)} = \< x_i^{|b_{i1}|} - x_j^{|b_{j1}|},
x_i^{|b_{i2}|} - x_j^{|b_{j2}|}\> \subset \kk[x_i,x_j]$, and by
  Lemma~\ref{lemma:chain}, $\sG(\overline{I(B)})$ can be
  constructed from the graph $G(M)$ associated to the $2\times 2$ matrix
  $M$ whose rows are $b_i$ and $b_j$. But we have already
  characterized the connected components of $G(M)$ in
  Proposition~\ref{propo:2x2 toral graphs}, so we can 
  describe the corresponding primary component by applying
  Theorem~\ref{thm:toral components}.

When $\kk$ is of characteristic zero, this yields a very satisfactory
picture of the toral components of a codimension two lattice basis
ideal: the primary components corresponding to non-monomial associated
primes are isomorphic to toric ideals by rescaling the variables; the
primary components corresponding to monomial toral primes are
described by Theorem~\ref{thm:toral components} and
Proposition~\ref{propo:2x2 toral graphs}. 

\begin{example}
Let 
\[
  B = \begin{bmatrix} 
\phantom{-}2 & \phantom{-}4 \\
-4 & -6 \\
\phantom{-}2 & \phantom{-}3 \\
-1 &\phantom{-}3 \\
-1 & -2 \\
\phantom{-}2 & -6 \\
-8 & -12\\
-3 & -6
\end{bmatrix},
\]
so that $I(B) = \langle x_{1}^{2}x_{3}^{2}x_{6}^{2} -
x_{2}^{4}x_{4}x_{5}x_{7}^{8}x_{8}^{3}, x_{1}^{4}x_{3}^{3}x_{4}^{3} -
x_{2}^{6}x_{5}^{2}x_{6}^{6}x_{7}^{12}x_{8}^{6} \rangle$. 

The rows $(2, 4)$ and $(-4, -6)$ are linearly independent and lie in
opposite open quadrants, so we consider
$ M = \begin{bmatrix} 
 \phantom{-}2 & \phantom{-}4 \\
-4 & -6 
\end{bmatrix}$. 
We can compute the monomials of the $\langle x_{1},
x_{2}\rangle$-primary component of $I(B)$ by looking the graph of
$G(M)$; see Figure~\ref{fig:monomials}.

\begin{figure}[h]
\begin{center}

\begin{tikzpicture}[thick,scale=0.6]
\draw [dotted, gray] (0,0) grid (7,9);
\draw [->] (0,0) -- (0,9);
\draw [thick, ->] (0,0) -- (7,0);
\draw [thick] (2,4) -- (0,8);
\draw [thick] (4,2) -- (0,8);
\draw [thick] (4,2) -- (2,6);
\draw [thick] (2,2) -- (0,6);
\draw [thick] (4,0) -- (0,6);
\draw [thick] (4,0) -- (2,4);
\draw [dashed] (0,6) -- (2,6);
\draw [dashed] (2,6) -- (2,2);
\draw [dashed] (2,2) -- (4,2);
\draw [dashed] (4,2) -- (4,0);
\draw [dotted] (4,4) -- (3.5,3.5);
\node [above] at (0,9) {$u_{y}$};
\node [right] at (7,0) {$u_{x}$};
\foreach \Point in{(2,2), (4,0), (0,6)}{
  \node at \Point {\ding{109}};
}
\foreach \Point in{(2,4), (4,0), (0,6), (2,2), (0,8), (4,2), (2,6)}{
  \node at \Point {\textbullet};
}
\end{tikzpicture}
\caption{The graph of $G(M)$ \label{fig:monomials}}
\end{center}
\end{figure}
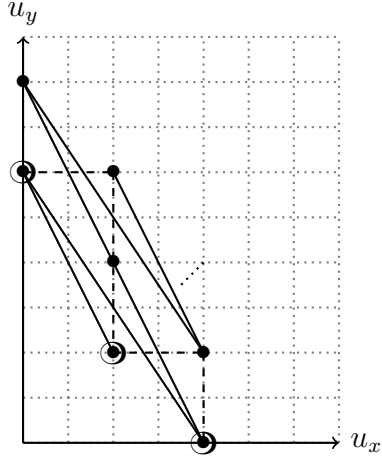

By Theorem~\ref{thm:toral components}, the $\langle x_{1},
x_{2}\rangle$-primary component of $I(B)$ is: 

\[ (I(B) : (\prod_{\ell \neq  1, 2}{x_{\ell}})^{\infty}) + \langle
x_{1}^{4}, x_{2}^{6}, x_{1}^{2}x_{2}^{2}\rangle = \langle x_{1}^{4},
x_{2}^{6}, x_{1}^{2}x_{2}^{2},  x_{2}^{4}x_{4}x_{5}x_{7}^{8}x_{8}^{3}
- x_{1}^{2}x_{3}^{2}x_{6}^{2}\rangle.\] 

\end{example}

\section{Codimension two lattice basis ideals in three variables}
\label{sec:three variables}

In this section, we study the Andean components of codimension two
lattice basis ideals in three variables.

\begin{notation}
\label{conv : matrix.} 
For this section only, and except where otherwise noted, we let $B$ be
$3\times 2$ matrix of full rank $2$ as follows:
\[
B = \begin{bmatrix}
\phantom{-\lambda} r & \phantom{-\lambda} s \\
-\lambda r & -\lambda s \\
\phantom{-\lambda} a & \phantom{-\lambda} b
\end{bmatrix}
\]
where $r, s, a, b \in \ZZ_{>0}$, $a \leq b$, $r \geq s$, 
$\gcd(r,s) = d$, and
$0<\lambda = p/q$ in lowest terms. 
We work in the polynomial ring $\kk[x,y,z]$. 
The lattice basis ideal associated to $B$ is $I(B) = \langle
x^{r}z^{a} - y^{\lambda r}, 
x^{s}z^{b} - y^{\lambda s} \rangle \subseteq \kk[x,y,z]$.
We let $P =\NN^2 \times \ZZ$, and work with $\kk[P] = \kk[z^{\pm}][x,y]$.
\end{notation}

\begin{remark} 
\label{rmk: graphs in hyperplanes}
For a codimension two lattice basis ideal $I(B)$ arising from a $3\times
2$ matrix $B$ as in Notation~\ref{conv : matrix.}, 
a vertex $u=(u_x,u_y,u_z)$ of $\sG(I(B))$ that lies on a hyperplane $u_y =
-\lambda u_x + \lambda \ell$, for $\ell \in \QQ$, can only be connected
to other vertices on that hyperplane. This follows from
Lemma~\ref{lemma:chain} since the columns of $B$ are parallel to the
hyperplane 
$u_y=-\lambda u_x$.
\end{remark}

For $B$ as above, we wish to compute the primary component of
$I(B)$ corresponding to the (Andean) associated prime $\<x,y\>$.
According to Theorem~\ref{thm:component
  for monomial associated prime}, we need to
understand the graph arising from the extension of $I(B)$ to
$\kk[z^{\pm}][x,y] = \kk[P]$. 

The following proposition shows that all the
information we need about the infinite components of
$\sG_P(\kk[P]\cdot I(B))$ is contained in the infinite components 
of $\sG(I(B))$, consequently the graph $G(B)$ can be used to compute the
primary component of $I(B)$ corresponding to the associated prime $\<x,y\>$.

\begin{proposition}
\label{prop : inf ver of I and inf ver of J.} 
  Let $B$ and $P=\NN^2 \times \ZZ$ as in Notation~\ref{conv : matrix.}. Then
\begin{align*}
\{ (u_x,u_y) \in \NN^2 \mid \exists u_z \in \ZZ \text{ such that }
(u_x,u_y,u_z) \text{ is an infinite vertex of } \sG_P(\kk[P] \cdot I(B))\}
= \\
\{ (u_x,u_y) \in \NN^2 \mid \exists u_z \in \NN \text{ such that }
(u_x,u_y,u_z) \text{ is an infinite vertex of } \sG(I(B))\}.
\end{align*}
  Consequently, the primary component of $I(B)$ corresponding to the
  associated prime $\<x,y\>$ is:
\[
(I:z^\infty) + \< x^{u_x} y^{u_y} \mid \exists u_z \in \NN \text{ such that }
(u_x,u_y,u_z) \text{ is an infinite vertex of } G(B)\>.
\]
\end{proposition}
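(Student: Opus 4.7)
The plan is to translate everything to matrix graphs via Lemma~\ref{lemma:chain}, so the asserted set equality reduces to comparing infinite vertices of $G(B)$ (with vertex set $\NN^3$) and $G_P(B)$ (with vertex set $P=\NN^2\times\ZZ$). The primary-component formula then follows from Theorem~\ref{thm:component for monomial associated prime} applied with $\sigma=\{1,2\}$, together with Remark~\ref{rmk:generated in sigma vars}, which restricts the monomial generators to powers of $x$ and $y$ alone.

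The forward inclusion is immediate: $G(B)$ is the subgraph of $G_P(B)$ induced on $\NN^3\subseteq P$, so every infinite component of $G(B)$ sits inside an infinite component of $G_P(B)$, yielding an infinite $G_P(B)$-vertex with the same $(u_x,u_y)$-coordinates.

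For the reverse inclusion, let $C$ be an infinite component of $G_P(B)$. By Remark~\ref{rmk: graphs in hyperplanes}, $C$ lies in the hyperplane $u_y+\lambda u_x=c$ for some $c$, forcing the projection $\pi(C)\subset\NN^2$ to the first two coordinates to be finite; hence some fiber $\{u_z\in\ZZ:(v_x,v_y,u_z)\in C\}$ must be infinite. The rank-$2$ hypothesis on $B$ gives $rb\neq sa$, so the differences of elements of any fiber of $C$ lie in $e\ZZ$ where $e:=|rb-sa|/d>0$. The key observation for lifting from $G_P(B)$ to $G(B)$ is that translating any $G_P(B)$-path by $(0,0,T)$ for $T\in\ZZ$ yields another valid $G_P(B)$-path with the same $(u_x,u_y)$-footprint, since each column of $B$ acts on $u_z$ independently of its action on $(u_x,u_y)$.

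Using this observation, fix a $G_P(B)$-walk $\gamma$ inside $C$ from $(v_x,v_y,v_z)$ to $(v_x,v_y,v_z+e)$; choosing $T$ larger than the absolute value of the minimum $u_z$-coordinate encountered along $\gamma$, the translate $\gamma+(0,0,T)$ lies entirely in $\NN^3$ and so is a walk in $G(B)$, and iterating shows $(v_x,v_y,v_z+T)$ is an infinite vertex of $G(B)$. For any other $(u_x,u_y)\in\pi(C)$, splicing such a vertical walk of height $e$ at $(v_x,v_y)$ with a $G_P(B)$-walk inside $C$ from $(u_x,u_y,\ast)$ to $(v_x,v_y,\ast)$ and its reverse yields a $G_P(B)$-walk from $(u_x,u_y,u_z')$ to $(u_x,u_y,u_z'+e)$, to which the same translation argument applies. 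I expect the main bookkeeping obstacle to be ensuring the shift $T$ can be chosen uniformly so that these lifts work simultaneously for every $(u_x,u_y)\in\pi(C)$; this is straightforward because $\pi(C)$ is finite and each auxiliary walk has finite length.
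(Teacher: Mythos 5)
Your proof is correct, and it lands in the same place as the paper's, but the mechanics are different enough to be worth noting. The paper's proof of the reverse inclusion is algebraic: starting from an infinite vertex $u$ of $\sG_P(\kk[P]\cdot I(B))$, it invokes Lemma~\ref{lemma : inf comp.} to extract $v,\tilde v$ connected to $u$ with $\tilde v-v\in P$, clears $z$-denominators in the binomial witnessing the connection to produce a uniform shift $\mu$ placing everything in $\NN^3$, then uses the hyperplane constraint of Remark~\ref{rmk: graphs in hyperplanes} together with $\lambda>0$ to force $(\tilde v_x,\tilde v_y)=(v_x,v_y)$, and finally applies Lemma~\ref{lemma : inf comp.} a second time inside $\sG(I(B))$. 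You instead work entirely with walks in $G_P(B)$: you use the hyperplane constraint to bound the $(u_x,u_y)$-projection of an infinite component, apply pigeonhole to find an infinite fiber, extract an explicit vertical walk, and replay it (shifted upward by $T$) inside $\NN^3$, iterating to exhibit infinitely many vertices directly rather than citing Lemma~\ref{lemma : inf comp.}. Your shift-by-$(0,0,T)$ step is the graph-theoretic shadow of the paper's clear-the-$z$-denominators step, so the kernel of the idea is shared; what you add is a self-contained replacement for the two invocations of Lemma~\ref{lemma : inf comp.}, at the modest cost of a longer argument. Two small imprecisions, neither fatal: the vertical walk you extract has height some positive multiple $me$ of $e=|rb-sa|/d$, not necessarily $e$ itself (the fiber need not realize the generator of $e\ZZ$), but any $m\ge 1$ makes the iteration work; and the closing worry about choosing $T$ \emph{uniformly} over $\pi(C)$ is unnecessary, since the statement only requires, for each $(u_x,u_y)$ separately, \emph{some} $u_z\in\NN$ making $(u_x,u_y,u_z)$ an infinite vertex of $\sG(I(B))$.
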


\begin{proof} 
  If $u = (u_{x}, u_{y}, u_{z}) \in \NN^3$ is an infinite vertex of
  $\sG(I(B))$, then it is clear that it is also an infinite vertex of
  $\sG_P(\kk[P]\cdot I(B))$. 

  Let ${u} =
  ({u}_{x},{u}_{y},{u}_{z}) \in P$ be an infinite vertex
  of of $\sG_P(\kk[P]\cdot I(B))$. By Lemma~\ref{lemma : inf comp.} there
  exists $v=(v_x,v_y,v_z), \tv=(\tv_x,\tv_y,\tv_z) \in \NN^2 \times
  \ZZ$ connected to ${u}$ 
  such that $\tv_x \geq v_x$ and ${\tv}_y \geq {v}_y$.
  Since ${u}$ is connected to ${v}$, we can find a nonzero
  $\rho \in k$ such that $x^{u_x}y^{u_y}z^{u_z} - \rho
  x^{v_x}y^{v_y}z^{v_z} \in \kk[P]\cdot I(B)$, and by clearing
  denominators, we can produce $\mu \in \NN$
  such that $z^\mu(x^{u_x}y^{u_y}z^{u_z} - \rho
  x^{v_x}y^{v_y}z^{v_z}) \in I(B)$; in particular, $\mu + u_z$ and
  $\mu+v_z$ are non negative. Thus, the vertices $(u_x,u_y,u_z+\mu),
  (v_x,v_y,v_z+\mu) \in \NN^3$ are connected in $\sG(I(B))$. Enlarging
  $\mu$ as needed, we may assume that 
  $(u_x,u_y,u_z+\mu),
  (v_x,v_y,v_z+\mu), (\tv_x,\tv_y,\tv_z+\mu)$ are coordinatewise non
  negative and connected in $\sG(I(B))$.

  By
  Remark~\ref{rmk: graphs in hyperplanes}, there exists $\ell \in \QQ$ such that
  $v_y=-\lambda v_x + \lambda\ell$ and $\tv_y=-\lambda \tv_x+\lambda
  \ell$, so that $\tv_y-v_y = -\lambda (\tv_x-v_x)$. Since $\lambda>0$ and
  $\tv_x-v_x$, $\tv_y-v_y$ are non negative, we see that $v_x=\tv_x$ and $v_y=\tv_y$.
  
  In conclusion, the vertices $(v_x,v_y,v_z+\mu), (v_x,v_y,\tv_z+\mu)
  \in \NN^3$ are
  connected in $G(I(B))$; since either $(v_x,v_y,v_z+\mu)
  -(v_x,v_y,\tv_z+\mu)$ or $(v_x,v_y,\tv_z+\mu) - (v_x,v_y,v_z+\mu)$
  belongs to $\NN^3$, we see that these vertices belong to an infinite
  component of $\sG(I(B))$ by Lemma~\ref{lemma : inf comp.}. As these
  vertices are connected to $(u_x,u_y,u_z+\mu)$, we conclude that this is
  an infinite vertex of $\sG(I(B))$.

  The last statement now follows from 
  Theorem~\ref{thm:component
  for monomial associated prime}, Remark~\ref{rmk:generated in
    sigma vars} and Lemma~\ref{lemma:chain}.
\end{proof}

For the remainder of this section, we study the connected components
of the graph $G(B)$ for $B$ as in Notation~\ref{conv : matrix.}. 
The following definition is motivated by Remark~\ref{rmk: graphs in hyperplanes}.

\begin{definition}
\label{def : slice graph} 
For a matrix $B$ as in Notation~\ref{conv : matrix.} and $\ell \in
(1/p)\NN$, set $S(\ell) = \NN^n \cap \{ (u_x,u_y,u_z) \mid u_y =
-\lambda u_x + \lambda \ell \}$. The graph $G_{S(\ell)}(B)$
is called the \emph{$\ell$th-slice graph of $B$}. Slice graphs are
illustrated in Figure~\ref{fig:slices}. 
\end{definition}

By Remark~\ref{rmk: graphs in hyperplanes}, $G(B)$ equals 
the (disjoint) union $\bigcup_{\ell \in
  (1/p)\NN} G_{S(\ell)}(B)$ as a graph. 

\begin{figure}[h]
\begin{center}
\begin{tikzpicture}[x={(240:0.8cm)}, y={(-10:1cm)}, z={(0,1cm)}, 
                   plane max z=3]
    \draw[->] (0,0,0) -- (8,0,0) node[anchor=west]{$u_x$};
    \draw[->] (0,0,0) -- (0,8,0) node[anchor=west]{$u_y$};
    \draw[->] (0,0,0) -- (0,0,4) node[anchor=west]{$u_z$};
        \draw[thick] (4,0,1) -- (0,4,0);
	\draw[thick, gray] (7,0,1) -- (0,7,0);
	\draw[thick, gray] (3,4,0) -- (0,7,0);
	\draw[thick, gray] (7,0,1) -- (4,3,1);
	\draw[dashed] (2,2,3) -- (2,2,3.5);
	\draw[dashed, gray] (4,0,0) -- (0,4,0);
	\draw[dashed, gray] (7,0,0) -- (0,7,0);
	\draw[dashed, gray] (4,0,3) -- (4,0,0);
	\draw[dashed, gray] (0,4,2) -- (0,4,0);
	\draw[dashed, gray] (4,3,0) -- (4,3,1);
	\draw[thick] (4,0,2) -- (0,4,1);
	\draw[thick] (4,0,3) -- (0,4,2);
	\draw[dashed, gray] (7,0,2) -- (7,0,0);
	\draw[thick, gray] (7,0,2) -- (0,7,1);
	\draw[thick, gray] (7,0,2) -- (4,3,2);
	\draw[thick, gray] (3,4,1) -- (0,7,1);
	\draw[dashed, gray] (4,3,2) -- (4,3,0);
	\draw[dashed, gray] (3,4,1) -- (3,4,0);
	\draw[dashed, gray] (0,7,1) -- (0,7,0);
	\node [below] at (6,6,0) {};
        \node [right] at (0,4,0.5) {{$G_{S(4)}(B)$}};
	\node [right] at (0,7,0.5) {{$G_{S(7)}(B)$}};
	\foreach \Point in{(4,0,1), (0,4,0), (7,0,1), (0,7,0),
          (3,4,0), (4,3,1), (4,0,2), (0,4,1), (4,0,3), (0,4,2),
          (7,0,2), (4,3,2), (0,7,1), (3,4,1)}{ \node at \Point
          {\textbullet};  }
\end{tikzpicture}
\end{center}
\caption{Slice graphs for $I(B) = \langle x^{4}z - y^{4}, x^{7}z - y^{7} \rangle$.}
\label{fig:slices}
\end{figure}
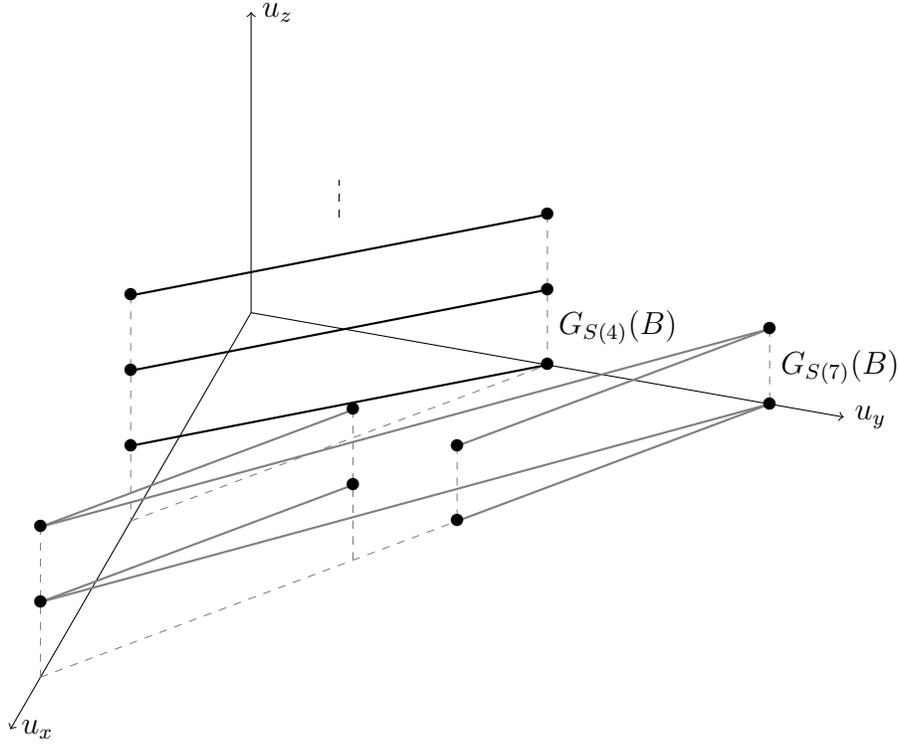

The following result groups the slice graphs of $B$ according
to isomorphism.

\begin{lemma}
\label{prop : translations.} 
  Let $B$ as in Notation~\ref{conv : matrix.}.
  Suppose $(c_x,c_y,c_z) \in \NN^3$ is a vertex of $G_{S(\ell)}(B)$, where
  $\ell \in (1/p)\NN$. 
  Write $c_x = q \bar{c}_x + i$, $c_y = p \bar{c}_y + j$, where $\bar{c}_x,
  \bar{c}_y, i, j \in \NN$ and $0 \leq i < q$, $0 \leq j < p$. Then
  $G_{S(\ell)}(B)$ is isomorphic to the slice graph $G_{S(\ell
    -i-j/\lambda)}(B)$ that contains $(q\bar{c}_x,p\bar{c}_y,c_z)$ as a
  vertex. Consequently, in order to understand the (connected
  components of) all the slice graphs of $B$, it is enough to
  understand $G_{S(\ell)}(B)$ for $\ell \in q\NN$.
\end{lemma}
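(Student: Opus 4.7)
The strategy is to identify, for each slice $S(\ell)$, a fixed residue class of $(u_x, u_y)$ modulo $(q,p)$ that every lattice point on the slice must lie in; the translation by $-(i,j,0)$ then gives a graph isomorphism between $G_{S(\ell)}(B)$ and a slice in which the base vertex has $x$-coordinate divisible by $q$ and $y$-coordinate divisible by $p$.

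\textbf{Step 1 (residues are constant on a slice).} Rewrite the defining equation of $S(\ell)$: clearing denominators in $u_y = -\lambda u_x + \lambda \ell$ gives $pu_x + qu_y = p\ell$. Hence if two vertices $(u_x,u_y,u_z), (u_x', u_y', u_z') \in S(\ell)$, then $p(u_x - u_x') = q(u_y' - u_y)$, and $\gcd(p,q)=1$ forces $q \mid (u_x-u_x')$ and $p \mid (u_y'-u_y)$. Thus every vertex of $G_{S(\ell)}(B)$ satisfies $u_x \equiv i \pmod q$ and $u_y \equiv j \pmod p$, where $i,j$ are the residues of $c_x, c_y$.

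\textbf{Step 2 (the translation is a graph isomorphism).} Consider $\phi(u_x,u_y,u_z) = (u_x - i, u_y - j, u_z)$. By Step 1, $u_x \geq i$ and $u_y \geq j$, so $\phi(u_x,u_y,u_z) \in \NN^3$. A direct computation using $pu_x + qu_y = p\ell$ gives
\[
p(u_x-i) + q(u_y-j) = p\ell - pi - qj = p(\ell - i - j/\lambda),
\]
since $qj = pj\cdot(q/p) = pj/\lambda$. Hence $\phi$ sends $S(\ell)$ into $S(\ell - i - j/\lambda)$. It is clearly injective, and surjective because any $(v_x,v_y,v_z)$ in the target slice must have $q\mid v_x$ and $p\mid v_y$ by Step 1 applied there, so $(v_x+i,v_y+j,v_z)$ lies in $S(\ell)\cap \NN^3$. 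Finally, $\phi$ preserves edges because the edges of $G(B)$ are given by differences equal to the columns of $B$, and translation commutes with taking differences. Therefore $\phi$ is a graph isomorphism between $G_{S(\ell)}(B)$ and $G_{S(\ell - i - j/\lambda)}(B)$, and $\phi(c_x,c_y,c_z) = (q\bar{c}_x, p\bar{c}_y, c_z)$ by construction.

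\textbf{Step 3 (the consequence).} The image slice contains the vertex $(q\bar{c}_x, p\bar{c}_y, c_z)$ whose $x$- and $y$-coordinates are divisible by $q$ and $p$ respectively. Solving $pu_x + qu_y = p\ell'$ at this vertex yields
\[
\ell' = q\bar{c}_x + \frac{p\bar{c}_y}{\lambda} = q\bar{c}_x + q\bar{c}_y = q(\bar{c}_x + \bar{c}_y) \in q\NN,
\]
which proves the final assertion: every slice graph is isomorphic (via a translation) to $G_{S(\ell')}(B)$ for some $\ell' \in q\NN$.

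\textbf{Main obstacle.} There is no real obstacle; once the slice equation is rewritten as $pu_x + qu_y = p\ell$, the $\gcd(p,q)=1$ condition immediately pins down the residues, and everything else is bookkeeping. The only point requiring a little care is verifying that the target value of $\ell$ under $\phi$ matches the expression $\ell - i - j/\lambda$ in the statement, which amounts to the identity $qj = pj/\lambda$.
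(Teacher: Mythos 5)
Your proof is correct and follows essentially the same route as the paper: the paper's proof simply exhibits the translation $(u_x,u_y,u_z)\mapsto(u_x-i,u_y-j,u_z)$ as the isomorphism and notes that a vertex $(q\bar{c}_x,p\bar{c}_y,c_z)$ lies in a slice indexed by an element of $q\NN$. Your Steps 1--3 just spell out the details (constancy of residues on a slice via $pu_x+qu_y=p\ell$ and $\gcd(p,q)=1$, well-definedness into $\NN^3$, bijectivity, edge preservation, and the target index), which the paper leaves implicit.
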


\begin{proof} 
The desired isomorphism $\varphi_{ij}$ between $G_{S(\ell)}(B)$ and
$G_{S(\ell -i-j/\lambda)}(B)$ is defined by 
\[
(u_x,u_y,u_z) \mapsto
(u_x-i,u_y-j,u_z).
\] 
Note that a vertex of the form $(q\bar{c}_x,p\bar{c}_y,c_z)
\in \NN^3$, where $\bar{c}_x,\bar{c}_y \in\NN$ belongs to a slice graph
$G_{S(\bar{\ell})}(B)$ where $\bar{\ell}\in q\NN$.
\end{proof}

Our next step is to ``straighten out'' the slice graphs of $B$.
Recall the notation of Notation~\ref{conv : matrix.}.
Given $\ell \in \NN$, let $\phi_\ell : \NN^{2} \to \ZZ^{3}$ be the
injective function defined by  
$(w, k) \mapsto (qw, \lambda (q\ell - qw),k)=(qw,p(\ell-w),k)$. 
Note that the image 
$\phi_\ell(\{ (w,k) \in \NN^2 \mid 0\leq w \leq \ell, 0 \leq k \})$ 
is the intersection with $\NN^3$ of the hyperplane
given by $u_y = -\lambda u_x + \lambda q\ell$. 

\begin{lemma}
\label{lemma:straightening graphs}
Let $B$ be as in Notation~\ref{conv : matrix.} and set
$M = \left[ \begin{smallmatrix} r/q & s/q \\ a & b \end{smallmatrix} \right]$.
Given $\ell \in \NN$, define $\phi_\ell$ as above. The image under
$\phi_\ell$ of the band graph $G_\ell(M)$
(see Notation~\ref{notation:bands}) is the slice graph
$G_{S(q\ell)}(B)$. \qed 
\end{lemma}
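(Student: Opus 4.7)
The plan is to verify that $\phi_\ell$ restricts to a bijection on vertex sets and that it maps edges of $G_\ell(M)$ to edges of $G_{S(q\ell)}(B)$ and vice versa. Both parts are essentially routine computations using that $\lambda = p/q$ with $\gcd(p,q)=1$.

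First, for the vertex correspondence, I would check that $\phi_\ell(w,k) = (qw, p(\ell-w), k)$ lies in $S(q\ell)$ whenever $(w,k)$ is a vertex of $G_\ell(M)$: the inequalities $0\leq w \leq \ell$ and $k \geq 0$ give non-negativity of all three coordinates, and the identity $p(\ell-w) = -\lambda(qw) + \lambda q\ell$ puts the image on the required hyperplane. For surjectivity onto $S(q\ell)$, given $(u_x,u_y,u_z) \in S(q\ell)$, the relation $p u_x + q u_y = pq\ell$ together with $\gcd(p,q)=1$ forces $q \mid u_x$; writing $u_x = qw$ recovers a unique preimage $(w,u_z) \in Q_\ell$. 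Injectivity of $\phi_\ell$ is immediate from the formula.

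Next, for the edge correspondence, the columns of $M$ are $(r/q,a)^T$ and $(s/q,b)^T$, and the columns of $B$ are $(r,-\lambda r,a)^T$ and $(s,-\lambda s,b)^T$. A direct computation shows
\[
\phi_\ell(w,k) - \phi_\ell(w',k') = \bigl(q(w-w'),\, -p(w-w'),\, k-k'\bigr),
\]
so subtracting vertices in $Q_\ell$ yields a column of $B$ precisely when subtracting the corresponding vertices in $\NN^2$ yields a column of $M$ (the second coordinate is automatically determined by the first via the factor $-\lambda = -p/q$). This gives the edge correspondence in both directions.

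The only subtle point is the use of $\gcd(p,q)=1$ to conclude $q\mid u_x$ in the surjectivity step; without this, some points on the hyperplane would lack a preimage. I would not expect any real obstacle, since the lemma amounts to unraveling the definitions; the author's \qed after the statement confirms that the proof is essentially this bookkeeping.
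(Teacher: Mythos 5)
Your proof is correct and is precisely the routine verification that the paper's author left implicit (the \qed after the statement signals the proof is omitted as a direct computation). The vertex bijection correctly uses $\gcd(p,q)=1$ to force $q\mid u_x$, and the edge check correctly matches the columns of $M$ and $B$ under $\phi_\ell$, so there is nothing to add.
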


Since we have already studied the connected components of the band
graphs $G_\ell(M)$, we are ready to compute the primary component of
$I(B)$ associated to $\<x,y\>$.

\begin{theorem}
\label{thm:component in three variables}
Let $B$ as in Notation~\ref{conv : matrix.}. The primary component of
$I(B)$ corresponding to the associated prime $\<x,y\>$ is
\[
(I(B):z^\infty)+
\< y^{\lambda(r + s - d)}, x^{d}y^{\lambda(r + s - 2d)},
x^{2d}y^{\lambda(r + s - 3d)},\dots,x^{r + s -d} \>.
\]
\end{theorem}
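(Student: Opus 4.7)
By Proposition~\ref{prop : inf ver of I and inf ver of J.}, computing the $\langle x,y\rangle$-primary component of $I(B)$ reduces to identifying the pairs $(u_x,u_y) \in \NN^2$ that arise as $(u_x,u_y)$-projections of infinite vertices of $G(B)$, and showing that the resulting set of monomials $x^{u_x}y^{u_y}$ spans the ideal
\[
J \;:=\; \langle y^{\lambda(r+s-d)},\; x^d y^{\lambda(r+s-2d)},\;\dots,\; x^{r+s-d}\rangle.
\]
The first step is to split $G(B)$ into slice graphs via Remark~\ref{rmk: graphs in hyperplanes}, then use Lemmas~\ref{prop : translations.} and~\ref{lemma:straightening graphs} to identify each slice graph with a band graph of the $2 \times 2$ matrix $M = \bigl[\begin{smallmatrix} r/q & s/q \\ a & b \end{smallmatrix}\bigr]$. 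Explicitly, writing $u_x = q\bar u_x + i$ and $u_y = p\bar u_y + j$ with $0 \le i < q$ and $0 \le j < p$, the vertex $(u_x,u_y,u_z)$ of $G(B)$ corresponds to the vertex $(\bar u_x, u_z)$ of $G_{\bar\ell}(M)$ with $\bar\ell := \bar u_x + \bar u_y$.

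Next I would apply Theorem~\ref{prop : dilation.} to $M$. Because $\lambda r, \lambda s \in \ZZ$, the denominator $q$ divides both $r$ and $s$, hence $q \mid d$ and $\gcd(r/q, s/q) = d/q$, so the hypotheses of Theorem~\ref{prop : dilation.} are met. Tracking through the residue-class argument in its proof (which applies Proposition~\ref{prop : xyz-matrix.} to each residue class modulo $d/q$ separately), the vertex $(\bar u_x, u_z)$ is infinite in $G_{\bar\ell}(M)$ for some $u_z \in \NN$ exactly when $\bar\ell \ge t + (r+s-d)/q$, where $t := \bar u_x \bmod (d/q)$. Setting $m := \lfloor u_x/d \rfloor$ and invoking the identity $qt + i = u_x \bmod d$ (a consequence of $q \mid d$), multiplication by $q$ converts this inequality into the equivalent form
\[
u_y \;\ge\; \lambda\bigl(r+s-(m+1)d\bigr) + j.
\]

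The last step is to verify that this inequality is equivalent to $x^{u_x}y^{u_y} \in J$. Membership in $J$ amounts to the existence of some $k \in \{0, \dots, (r+s)/d - 1\}$ with $u_x \ge kd$ and $u_y \ge \lambda(r+s-(k+1)d)$; taking $k = \min(m, (r+s)/d - 1)$, this reduces to the condition $u_y \ge \lambda(r+s-(m+1)d)$ (the case $m \ge (r+s)/d$ is trivial since then $u_x \ge r+s-d$, so both the infinite-vertex and ideal-membership conditions hold automatically). Because $q$ divides $r+s-(m+1)d$, the quantity $\lambda(r+s-(m+1)d) = ph$ with $h := (r+s-(m+1)d)/q$ is a multiple of $p$; writing $u_y = p\bar u_y + j$, one checks that both the infinite-vertex condition $u_y \ge ph + j$ and the ideal condition $u_y \ge ph$ reduce to the single inequality $\bar u_y \ge h$. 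The statement then follows by combining Proposition~\ref{prop : inf ver of I and inf ver of J.} with Theorem~\ref{thm:component for monomial associated prime}. The main obstacle throughout is the residue bookkeeping that aligns the conditions modulo $p$, $q$, and $d/q$ with the generators of $J$; once this is set up, the equivalence follows by elementary arithmetic.
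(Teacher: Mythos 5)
Your proof is correct and follows essentially the same route as the paper's own argument: reduce via Theorem~\ref{thm:component for monomial associated prime} and Proposition~\ref{prop : inf ver of I and inf ver of J.} to the graph $G(B)$, slice by Remark~\ref{rmk: graphs in hyperplanes}, translate and straighten via Lemmas~\ref{prop : translations.} and~\ref{lemma:straightening graphs} to the band graphs $G_\ell(M)$ of $M = \left[\begin{smallmatrix} r/q & s/q \\ a & b\end{smallmatrix}\right]$, and then apply the band-graph analysis. The paper's proof is only a two-sentence outline; you supply the residue bookkeeping modulo $p$, $q$, and $d/q$, and I checked that your intermediate criteria (in particular that the infinite-vertex condition $u_y \ge \lambda(r+s-(m+1)d)+j$ and the ideal-membership condition $u_y \ge \lambda(r+s-(m+1)d)$ both reduce to $\bar u_y \ge (r+s-(m+1)d)/q$) are arithmetically correct.

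One thing you should make explicit: the key criterion you extract, that $(\bar u_x, u_z)$ is infinite in $G_{\bar\ell}(M)$ for some $u_z$ precisely when $\bar\ell \ge t + (r+s-d)/q$ with $t = \bar u_x \bmod (d/q)$, is what falls out of running the residue-class decomposition in the proof of Theorem~\ref{prop : dilation.} through Proposition~\ref{prop : xyz-matrix.} --- but it is \emph{not} what Theorem~\ref{prop : dilation.} literally says. As stated, Theorem~\ref{prop : dilation.} asserts that for $0 \le t < d_M$ and $\ell = r_M + s_M - d_M + t$ the infinite vertices of $G_\ell(M)$ are exactly those with $w_0 \equiv 0 \pmod{d_M}$; applied verbatim this would yield a strictly smaller set of infinite vertices and hence would not match the claimed ideal. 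The discrepancy is real: for $M = \left[\begin{smallmatrix}4 & 2 \\ 1 & 1\end{smallmatrix}\right]$ (so $d_M = 2$) and $\ell = 5$, the vertex $(1,0)$ lies in an infinite component of $G_5(M)$, yet $1$ is odd. The source of the issue is in the proof of Theorem~\ref{prop : dilation.}: when $t_0 > 0$, the codomain of $\varphi_{\ell,t_0}$ should be $G_{\hat\ell}(\hat M)$ if $t_0 \le \ell \bmod d_M$ and $G_{\hat\ell - 1}(\hat M)$ only if $t_0 > \ell \bmod d_M$; the paper uses $G_{\hat\ell - 1}(\hat M)$ unconditionally. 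After this correction the criterion becomes exactly yours, so your proof is sound precisely because you re-derived from Proposition~\ref{prop : xyz-matrix.} rather than quoting the statement of Theorem~\ref{prop : dilation.}. It would strengthen your write-up to say this openly, since a reader who takes Theorem~\ref{prop : dilation.} at face value will get a different monomial ideal.

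A very minor point: you call the case $m \ge (r+s)/d$ trivial, but the trivial range is actually $m \ge (r+s)/d - 1$ (for $m = (r+s)/d - 1$ the bound is $u_y \ge 0$); this does not affect anything since you handle $m = (r+s)/d - 1$ by the general formula anyway.
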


\begin{proof}
By Theorem~\ref{thm:component for monomial associated prime}, the
desired component is $(I(B):z^\infty) + 
\mathscr{M}$, where 
\[
\mathscr{M} = \< x^{u_x}y^{u_y} \mid \exists u_z
\in \ZZ \text{ with } u=(u_x,u_y,u_z) \text{ an infinite vertex of }
\sG_{\NN^2\times \ZZ}(I(B))\> . 
\]
By Proposition~\ref{prop : inf ver of
  I and inf ver of J.}, we can use  
$\sG(I(B))$ instead of 
$\sG_{\NN^2\times \ZZ}(I(B))$ in the definition of $\mathscr{M}$, and
by Lemma~\ref{lemma:chain}, we can use $G(B)$ instead of $\sG(I(B))$.

Now combine Remark~\ref{rmk: graphs in hyperplanes}, Lemmas~\ref{prop : translations.}
and~\ref{lemma:straightening graphs} and Theorem~\ref{prop :
  dilation.} to obtain the desired result. 
\end{proof}

\section{Lattice basis ideals and hypergeometric systems}
\label{sec:hypergeometric}

In this section we state and prove our main result,
Theorem~\ref{thm:component}, that gives an explicit expression for the
Andean primary components of a codimension two lattice basis ideal. We
also provide an application to the study of hypergeometric systems of
differential equations.

\begin{theorem}
\label{thm:component}
Let $B$ as in Convention~\ref{conv:AB}, and suppose that $b_i$, $b_j$
are linearly dependent rows of $B$ lying in opposite open quadrants of
$\ZZ^2$. Without loss of generality, assume that $b_{i1},b_{i2} >0$,
let $d = \gcd(b_{i1},b_{i2})$, and write $\lambda = -b_{j1}/b_{i1} = -
b_{j2}/b_{i2} > 0$.
The primary component of
$I(B)$ corresponding to the associated prime $\<x_i,x_j\>$ is
\[
(I(B):(\prod_{k \neq i,j} x_k)^\infty)+
\< x_j^{\lambda(b_{i1} + b_{i2} - d)}, x_i^{d}x_j^{\lambda(b_{i1} + b_{i2}  - 2d)},
x_i^{2d}x_j^{\lambda(b_{i1} + b_{i2}  - 3d)},\dots, x_i^{b_{i1} + b_{i2}  -d} \>.
\]
The only monomials in this ideal are those in
$\< x_j^{\lambda(b_{i1} + b_{i2} - d)}, x_i^{d}x_j^{\lambda(b_{i1} +
  b_{i2}  - 2d)},\dots, x_i^{b_{i1} + b_{i2}  -d} \>$. 
\end{theorem}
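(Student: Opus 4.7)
The plan is to reduce Theorem~\ref{thm:component} to Theorem~\ref{thm:component in three variables} via a projection argument that parallels Section~\ref{sec:three variables}.

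\emph{Setup.} Apply Theorem~\ref{thm:component for monomial associated prime} with $\sigma=\{i,j\}$ and $P=\NN^{\sigma}\times \ZZ^{\bar{\sigma}}$. Combined with Remark~\ref{rmk:generated in sigma vars} and Lemma~\ref{lemma:chain}, this reduces the question to identifying those $(u_i,u_j)\in\NN^2$ such that $(u_i,u_j,0,\dots,0)$ is an infinite vertex of $G_P(B)$, and simultaneously delivers the ``only monomials'' assertion in the statement.

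\emph{Step 1: reducing to $G(B)$.} I would prove the $n$-variable analog of Proposition~\ref{prop : inf ver of I and inf ver of J.}, showing that the $(i,j)$-projections of infinite vertices of $G_P(B)$ coincide with those of $G(B)=G_{\NN^n}(B)$. Given distinct connected $v,\tilde v\in P$ with $\tilde v-v\in P$ (guaranteed by Lemma~\ref{lemma : inf comp.}), the relation $b_j=-\lambda b_i$ (the analog of Remark~\ref{rmk: graphs in hyperplanes}) forces $\tilde v_i=v_i$ and $\tilde v_j=v_j$; then lift everything into $\NN^n$ by multiplying through by a sufficiently high power of $\prod_{k\neq i,j}x_k$.

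\emph{Step 2: reducing to three variables.} Choose $k\neq i,j$ with $b_k$ linearly independent from $b_i$; such a $k$ exists because $B$ has rank two and $b_i,b_j$ are linearly dependent. Let $B_0$ be the $3\times 2$ submatrix with rows $b_i,b_j,b_k$, and let $\pi$ denote projection onto the $(u_i,u_j,u_k)$-coordinates. Linear independence of $b_i,b_k$ implies that the projection of $L=\ZZ B_1+\ZZ B_2$ to the $(u_i,u_k)$-plane is injective, so $\pi$ is injective on each connected component of $G(B)$. Infinite components of $G(B)$ therefore map to infinite components of $G(B_0)$, and an infinite vertex of $G(B_0)$ lifts to one of $G(B)$ by taking the remaining coordinates sufficiently large. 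Applying Theorem~\ref{thm:component in three variables} to $B_0$ then yields the stated generators, which depend only on $b_{i1},b_{i2},d,\lambda$.

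\emph{Main obstacle.} The row $b_k$ need not have positive entries, so $B_0$ may fail to satisfy Notation~\ref{conv : matrix.} and Theorem~\ref{thm:component in three variables} does not apply verbatim. The final answer, however, depends only on $b_i$ and $b_j$: in Theorem~\ref{prop : dilation.} the set of $w$-coordinates lying in infinite components is determined by $r,s,d$ and the band length $\ell$ alone, with the second row of $M$ entering only to produce the existence of some suitable $z_0$. I would close this gap either by extending the turn-counting argument of Proposition~\ref{prop : xyz-matrix.} to handle arbitrarily signed second rows, or -- more cleanly -- by bypassing the three-variable reduction and carrying out the one-dimensional dynamical analysis on $u_i\in[0,\ell]$ directly inside $G_P(B)$, where the freedom of the remaining coordinates in $\ZZ^{n-2}$ renders the problem essentially one-dimensional and governed by the same Frobenius-type combinatorics as in the proof of Theorem~\ref{prop : dilation.}.
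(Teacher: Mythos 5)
Your plan correctly isolates the crux (reduce to the three-variable Theorem~\ref{thm:component in three variables}), and you correctly flag the key obstacle: if you project onto an \emph{existing} triple of rows $(b_i,b_j,b_k)$, the third row $b_k$ need not have positive entries, so the resulting $3\times 2$ matrix $B_0$ may violate Notation~\ref{conv : matrix.} and Theorem~\ref{thm:component in three variables} does not apply. The two workarounds you sketch do leave a genuine gap: extending the turn-counting of Proposition~\ref{prop : xyz-matrix.} to arbitrarily signed second rows would require reproving a substantial part of Section~\ref{sec:graphs}, and carrying out the one-dimensional analysis directly inside $G_P(B)$ is precisely the $n$-variable work the reduction is meant to avoid.

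The paper's resolution is cleaner, and it exploits the observation you make almost in passing. Since the $\bar{\sigma}$ coordinates of $P=\NN^{\sigma}\times\ZZ^{\bar{\sigma}}$ are unconstrained, the entries of $B$ in the rows indexed by $\bar{\sigma}$ are combinatorially irrelevant to connectivity in $G_P(B)$. So rather than picking an existing row $b_k$, the paper builds a fresh auxiliary matrix $\hat{B}$ whose first two rows are $b_i, b_j$ and whose third row $(a,b)$ is chosen freely with $0<a\leq b$, so that $\hat{B}$ fits Notation~\ref{conv : matrix.} exactly. It then transfers a path witnessing infiniteness in $G_P(B)$ edge-by-edge to a path in $G_{\NN^2\times\ZZ}(\hat{B})$: the $\NN^{\sigma}$-coordinates of the intermediate vertices match, and the free $\ZZ$-coordinate absorbs whatever the new third row contributes. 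Lemma~\ref{lemma : inf comp.} then certifies infiniteness on the target side, and the converse is symmetric. So equality of the $(i,j)$-projections of infinite vertices holds, and Theorem~\ref{thm:component in three variables} applies verbatim to $\hat{B}$. The missing ingredient in your proposal is not a harder combinatorial lemma but simply the freedom to \emph{choose} the third row of the reduced matrix rather than inherit it from $B$; once you allow that, your obstacle disappears.
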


\begin{proof}

Let $\sigma = \{i, j\}$, and set $P =\NN^{\sigma} \times \ZZ^{\bar{\sigma}}$.
Choose $0< a \leq b \in \ZZ$ such
that the matrix $\hat{B} = \left[ \begin{smallmatrix} b_{i1} & b_{i2}
    \\ b_{j1} & b_{j2} \\ a & b \end{smallmatrix} \right]$ has rank
$2$.
Our result follows from Theorems~\ref{thm:component in three
  variables} and~\ref{thm:component for monomial associated prime} if we show that 
\begin{align*}
\{ (u_i,u_j) \in \NN^{\sigma} \mid \exists u \in P \text{ an infinite vertex of }
\sG_{P}(I(B)) \}
= \hspace{4cm}   \\ 
\{ (c_1,c_2) \in \NN^2 \mid \exists c_3 \in \ZZ \text{ with }
(c_1,c_2,c_3) \text{ an infinite vertex of } \sG_{\NN^2\times
  \ZZ}(I(\hat{B})).
\end{align*}

By Lemma~\ref{lemma:chain}, it is enough to show that

\begin{align*}
\{ (u_i,u_j) \in \NN^{\sigma} \mid \exists u \in P \text{ with $u_{i}$ and $u_{j}$ an infinite vertex of }
G_{P}(B) \}
= \hspace{2cm}   \\ \numberthis \label{eqn:reduce vars}
\{ (c_1,c_2) \in \NN^2 \mid \exists c_3 \in \ZZ \text{ with }
(c_1,c_2,c_3) \text{ an infinite vertex of } G_{\NN^2\times
  \ZZ}(\hat{B}).
\end{align*}

We show $\subseteq$; the other inclusion is similar.

Let $(u_i,u_j) \in \NN^{\sigma}$, such that there is $u \in \NN^{\sigma}
\times \ZZ^{\bar{\sigma}}$ (whose $i$th and $j$th coordinate are $u_i$
and $u_j$) that is an infinite vertex of $G_{P}(B)$. By
Lemma~\ref{lemma : inf comp.}, 
there are $v,\tv \in P$ connected to $u$ such that $\tv-v \in P$.
Then there is a sequence of vertices $u =
\mu^{(1)},\dots,\mu^{(\ell_1)} = v, \mu^{(\ell_1+1)},\dots,\mu^{(\ell_2)}=\tv \in P$ such that
$(\mu^{(k)},\mu^{(k+1)})$ is an edge in $G_P(B)$ for $k=1,\dots \ell_2-1$. 

Recall that $B_1$ and $B_2$ are the columns of $B$, and denote
$\hat{B}_1$ and $\hat{B}_2$ the columns of $\hat{B}$. Define
\[
v^{(k)} = \left\{
\begin{array}{ll}
\phantom{-}\hat{B}_1 \hspace{1cm} & \text{ if } \mu^{(k+1)}-\mu^{(k)} = B_1 ,\\
-\hat{B}_1 & \text{ if } \mu^{(k+1)}-\mu^{(k)} = -B_1, \\
\phantom{-}\hat{B}_2 \hspace{1cm} & \text{ if } \mu^{(k+1)}-\mu^{(k)} = B_2 ,\\
-\hat{B}_2 & \text{ if } \mu^{(k+1)}-\mu^{(k)} = -B_2.
\end{array}
\right.
\]

Choose any $c \in \ZZ$ and let $\nu^{(1)} = (u_i,u_j,c)$. Set also
$\nu^{(k+1)} = \nu^{(k)} + v^{(k)}$ for $k = 1,\dots, \ell_2-1$. Then
the first and second coordinates of $\nu^{(k)}$ are equal to the $i$th
and $j$th coordinates of $\mu^{(k)}$ respectively, which implies that
$\nu^{(1)},\dots, \nu^{(\ell_2)} \in \NN^2\times \ZZ$. By construction,
$(\nu^{(k)},\nu^{(k+1)})$ is an edge of $G_{\NN^2\times \ZZ}(\hat{B})$ for
$k=1,\dots, \ell_2-1$, so that in particular, $\nu^{(1)}$,
$\nu^{(\ell_1)}$ and $\nu^{(\ell_2)}$ belong to the same connected
component of $G_{\NN^2\times \ZZ}(\hat{B})$.
Moreover,
$\tv-v\in P$ implies that $\nu^{(\ell_2)}-\nu^{(\ell_1)} \in \NN^2
\times \ZZ$, so by Lemma~\ref{lemma : inf comp.}, 
$\nu^{(1)}$ is an infinite vertex of $G_{\NN^2\times \ZZ}(\hat{B})$, and we conclude
that $(u_i,u_j)$ belongs to the right hand side of~\eqref{eqn:reduce vars}. 
\end{proof}

We use Theorem~\ref{thm:component} to study hypergeometric
differential equations. For the remainder of this article, we work
over the field $\kk=\CC$. 

\begin{definition}
\label{def:Horn}
Let $B=(b_{rs})\in \ZZ^{n\times 2}$ as in Convention~\ref{conv:AB},
and let $\kappa \in \CC^n$. We 
use this information to define a system of partial differential
equations in two variables, called a \emph{bivariate hypergeometric system of
  Horn type}, and denoted $\Horn(B,\kappa)$, as follows. (Colored
symbols are added for convenience.) 

\begin{align*}
  \prod_{\mathbin{\color{Red}{\pmb{b_{r1}>0}}}}
  \prod_{\ell=0}^{\mathbin{\color{Red}{\pmb{b_{r1}-1}}}} 
\left(
b_{r1} z_1 \frac{\del \varphi(z_1,z_2)}{\del z_1} + b_{r2} z_2 \frac{\del
    \varphi(z_1,z_2)}{\del z_2} + \kappa_r -\ell \right)  
= \hspace{3cm}\\
\mathbin{\color{Red}{\pmb{z_1}}}
 \prod_{\mathbin{\color{Red}{\pmb{b_{r1}<0}}}}
 \prod_{\ell=0}^{\mathbin{\color{Red}{\pmb{-b_{r1}-1}}}}
  \left( b_{r1} z_1
  \frac{\del \varphi(z_1,z_2)}{\del z_1} + b_{r2} z_2 \frac{\del
    \varphi(z_1,z_2)}{\del z_2} + \kappa_r - \ell \right)  ,\\
  \prod_{\mathbin{\color{Red}{\pmb{b_{r2}>0}}}}
  \prod_{\ell=0}^{\mathbin{\color{Red}{\pmb{b_{r2}-1}}}} 
    \left(
    b_{r1} z_1 \frac{\del \varphi(z_1,z_2)}{\del z_1} + b_{r2} z_2 \frac{\del
    \varphi(z_1,z_2)}{\del z_2} + \kappa_r -\ell \right)  
= \hspace{3cm}\\
\mathbin{\color{Red}{\pmb{z_2}}} 
  \prod_{\mathbin{\color{Red}{\pmb{b_{r2}<0}}}}
  \prod_{\ell=0}^{\mathbin{\color{Red}{\pmb{-b_{r2}-1}}}} 
  \left( b_{r1} z_1
  \frac{\del \varphi(z_1,z_2)}{\del z_1} + b_{r2} z_2 \frac{\del
    \varphi(z_1,z_2)}{\del z_2} + \kappa_r - \ell \right)  .
\end{align*}
\end{definition}

\begin{example}
\label{ex:Appell}
Hypergeometric systems contain as examples many widely studied systems
of differential equations. For example, the Appell system $F_1$:
\begin{align*}
\frac{\del \varphi(z_1,z_2)}{\del z_1}
\left( z_1\frac{\del \varphi(z_1,z_2)}{\del z_1} + 
       z_2\frac{\del \varphi(z_1,z_2)}{\del z_2} +
       \gamma -1 
\right) 
=  \hspace{5cm} \\
\left(
z_1\frac{\del \varphi(z_1,z_2)}{\del z_1} + \beta
\right)
\left( 
z_1\frac{\del \varphi(z_1,z_2)}{\del z_1} + 
z_2 \frac{\del \varphi(z_1,z_2)}{\del z_2} + \alpha
\right) ,
\\
\frac{\del \varphi(z_1,z_2)}{\del z_2}
\left( z_1\frac{\del \varphi(z_1,z_2)}{\del z_1} + 
       z_2\frac{\del \varphi(z_1,z_2)}{\del z_2} +
       \gamma -1 
\right) 
=  \hspace{5cm} \\
\left(
z_1\frac{\del \varphi(z_1,z_2)}{\del z_1} + \beta'
\right)
\left( 
z_1\frac{\del \varphi(z_1,z_2)}{\del z_1} + 
z_2 \frac{\del \varphi(z_1,z_2)}{\del z_2} + \alpha
\right),
\end{align*}
arises from the matrix 
$B = \left[ \begin{smallmatrix} \phantom{-} 1 & \phantom{-} 1 \\
-1 & -1 \\ \phantom{-} 1 & \phantom{-} 0 \\ 
\phantom{-} 0 & \phantom{-} 1 \\ -1 & \phantom{-} 0 \\ \phantom{-} 0 & -1 \end{smallmatrix}\right]$ and the
vector $\kappa = (\gamma - 1, -\alpha, 0,0,-\beta,-\beta') \in \CC^6$.
It is clear that if $\alpha = \gamma -1$, then the space of germs of
holomorphic solutions of the Appell system
near a generic nonsingular point is infinite-dimensional, as any solution of 
$z_1 ({\del \varphi(z_1,z_2)}/{\del z_1}) + 
z_2  ({\del \varphi(z_1,z_2)}/{\del z_2}) + \alpha = 0$ is a solution
of the Appell system. 
The converse of this statement, though non-obvious, is also true.
In fact, the necessary and sufficient constraints on the parameters $\kappa$ for the
system $\Horn(B,\kappa)$ to have a finite dimensional solution space
can be determined using the primary decomposition of the lattice basis
ideal $I(B)$ (see~\cite[Theorem~6.2]{DMM}). 
The relevant object is the
\emph{Andean arrangement} of the lattice basis ideal $I(B)$
(see Definition~\ref{def:qdeg - Andean} below).
\end{example}

Recall from Convention~\ref{conv:AB} that we have been given a matrix
$A \in \ZZ^{(n-2)\times n}$ such that $AB = 0$, and whose columns
$a_1,\dots,a_n$ span $\ZZ^{n-2}$ as a lattice. The ideal $I(B)$ is
homogeneous with respect to the $\ZZ^{n-2}$-grading on $\CC[x]$
defined by $\deg(x_i)=a_i$ for $i=1,\dots, n$. If $V$ is a
$\ZZ^{n-2}$-graded $\CC[x]$-module, we define the set of
\emph{degrees} of $V$ as follows:
\[
\deg(V) = \{ \alpha \in \ZZ^{n-2} \mid V_{\alpha}\neq 0 \}.
\]
\begin{definition}
\label{def:qdeg - Andean}
Let $V$ be a $\ZZ^{n-2}$-graded module and 
consider $\deg(V)$ as a subset of $\CC^{n-2}$. The Zariski closure of
$\deg(V)$ in $\CC^{n-2}$ is called the set of \emph{quasidegrees} of
$V$, denoted $\qdeg(V)$.

If $A$ and $B$ are as in Convention~\ref{conv:AB}, the \emph{Andean
  arrangement} of $I(B)$ is the union over the Andean components
$\mathscr{C}$ of $I(B)$ of $\qdeg(\CC[x]/\mathscr{C})$.
\end{definition}

The Andean arrangement of $I(B)$ is a finite union of translates of
some faces of the cone $\RR_>0 A$ of non-negative combinations of the
columns of $A$ (see \cite[Lemma~6.2]{DMM}), so it is, indeed, an
arrangement of affine spaces.

The importance of this notion can be seen in the following result,
which is a special case of \cite[Theorem~6.3]{DMM}.

\begin{theorem}
\label{thm:finite rank}
Let $A$ and $B$ as in Convention~\ref{conv:AB}. Assume that
the span of the columns of $B$ contains no nonzero coordinatewise non
negative element (equivalently, the cone over the columns of $A$
contains no lines). 
The Horn system $\Horn(B,\kappa)$ has a finite dimensional solution
space if and only if $A\kappa$ lies outside the Andean arrangement of
$I(B)$. 
\end{theorem}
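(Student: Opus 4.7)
The plan is to derive this theorem as a direct specialization of \cite[Theorem~6.3]{DMM}, which establishes the analogous equivalence for lattice basis ideals of arbitrary codimension. The first step is to match the setups: the system $\Horn(B,\kappa)$ of Definition~\ref{def:Horn} is, after translating the scalar equations into differential operators acting on the germ $\varphi(z_1,z_2)$, identical to the Horn hypergeometric $D$-module considered in \cite{DMM}. The dimension of the space of germs of holomorphic solutions near a generic nonsingular point of the singular locus equals the holonomic rank of this $D$-module whenever that rank is finite, and is infinite otherwise; so the stated equivalence becomes a statement about when the Horn $D$-module has finite holonomic rank.

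The second step is to verify that the hypothesis imposed on $A$ and $B$ in our statement matches the pointedness hypothesis of \cite[Theorem~6.3]{DMM}. Because $AB = 0$ and $\rank(A) + \rank(B) = n$, we have $\image(B) = \ker(A)$ over $\RR$, so the column span of $B$ contains no nonzero coordinatewise nonnegative vector if and only if $\ker(A) \cap \RR_{\geq 0}^n = \{0\}$; by standard polyhedral duality (Gordan's lemma) this is equivalent to the cone $\RR_{\geq 0} A$ being pointed, i.e., containing no lines. The parenthetical equivalence in the statement is thus immediate.

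With these identifications in place, \cite[Theorem~6.3]{DMM} yields exactly the stated equivalence: by Definition~\ref{def:qdeg - Andean}, the Andean arrangement of $I(B)$ is the union of $\qdeg(\CC[x]/\mathscr{C})$ over the Andean primary components $\mathscr{C}$ of $I(B)$, and this is precisely the subvariety of $\CC^{n-2}$ that appears in the DMM criterion. The substantive mathematical content being invoked---the link between finite-dimensionality of the solution space and the vanishing of Euler--Koszul homology attached to the toral and Andean primary components of $I(B)$---is proved in \cite{DMM} and is used here as a black box; the main (and only) obstacle is the bookkeeping required to translate between the general codimension framework of \cite{DMM} and the codimension-two setup used throughout this article, which is carried out in the first two steps above.
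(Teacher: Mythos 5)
Your proposal matches the paper exactly: Theorem~\ref{thm:finite rank} is stated in the paper without a proof environment, introduced only by the sentence that it ``is a special case of \cite[Theorem~6.3]{DMM},'' which is precisely the black-box reduction you carry out. The extra bookkeeping you supply (identifying $\image(B)=\ker(A)$ and invoking Gordan/Stiemke to translate the pointedness hypothesis, and unwinding Definition~\ref{def:qdeg - Andean} to match the DMM subvariety) is a correct and reasonable way of spelling out the ``special case'' claim the authors leave implicit.
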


In fact, we can state a stronger result. A left ideal $J$ in the ring $D_\ell$ of
linear partial differential operators with polynomial coefficients in $\ell$
variables is \emph{holonomic} if $\Ext^j(D_\ell/J,D_\ell) = 0$ whenever $j\neq \ell$.
Holonomic ideals have finite dimensional solution spaces; the converse
of this statement, while untrue in general (see \cite[Example~9.1]{BMW}), does
hold for bivariate Horn systems.

\begin{theorem}[Corollary~9.6 in~\cite{BMW}]
\label{thm:holonomic}
Let $A$ and $B$ be as in Convention~\ref{conv:AB}.
Assume that
the span of the columns of $B$ contains no nonzero coordinatewise non
negative element (equivalently, the cone over the columns of $A$
contains no lines). 
The Horn system $\Horn(B,\kappa)$ is holonomic if and only if
$A\kappa$ lies outside the Andean arrangement of $I(B)$. 
\end{theorem}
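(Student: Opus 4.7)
The plan is to prove both implications separately. One direction is essentially free from Theorem~\ref{thm:finite rank}: if $\Horn(B,\kappa)$ is holonomic, then by the general theory of holonomic $D$-modules its solution space at a generic nonsingular point is finite dimensional, so Theorem~\ref{thm:finite rank} forces $A\kappa$ to lie outside the Andean arrangement of $I(B)$.

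The substantive direction is the converse. I would pass through Euler--Koszul homology, as in \cite{DMM,BMW}: for each $\ZZ^{n-2}$-graded $\CC[x]$-module $V$ and parameter $\alpha \in \CC^{n-2}$, the Euler--Koszul complex $K_\bullet(V;\alpha)$ carries a $D$-module structure on its homology, and for $V = \CC[x]/I(B)$ suitably twisted and $\alpha = A\kappa$, its zeroth homology $H_0$ is the $D$-module whose solution sheaf is $\Horn(B,\kappa)$. Using a primary decomposition $I(B) = \bigcap_{\mathscr{C}} \mathscr{C}$, together with the short exact sequences relating $\CC[x]/I(B)$ to the quotients $\CC[x]/\mathscr{C}$ and applying the long exact sequence of Euler--Koszul homology, the problem reduces to showing:
\begin{itemize}
\item[(i)] for every toral component $\mathscr{C}$, the Euler--Koszul homology of $\CC[x]/\mathscr{C}$ is holonomic for \emph{every} $\alpha$;
\item[(ii)] for every Andean component $\mathscr{C}$, if $\alpha \notin \qdeg(\CC[x]/\mathscr{C})$, then the Euler--Koszul homology of $\CC[x]/\mathscr{C}$ vanishes.
\end{itemize}
Statement (ii) is essentially the characterization of the Andean arrangement: when $\alpha$ misses $\qdeg(\CC[x]/\mathscr{C})$, a regular sequence argument on the Euler operators shows that $K_\bullet(\CC[x]/\mathscr{C};\alpha)$ is exact. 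Statement (i) is the toral holonomicity theorem; in our bivariate setting, the toral components of $I(B)$ are either (rescalings of) the toric ideal $I_A$, whose Euler--Koszul homology yields the classical $A$-hypergeometric $D$-module (known to be holonomic), or monomial toral components $\langle x_i,x_j\rangle$-primary, which by Theorem~\ref{thm:toral components} and Proposition~\ref{propo:2x2 toral graphs} are explicit enough that their Euler--Koszul homology can be shown to be holonomic by direct dimension count of characteristic varieties.

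Combining (i) and (ii) via the long exact sequence in Euler--Koszul homology shows that if $A\kappa$ lies outside the Andean arrangement, then $H_0$ is a subquotient of a direct sum of holonomic $D$-modules coming from toral components, hence is itself holonomic. Finally, one needs to identify this $H_0$ with the Horn $D$-module $D/D\cdot \Horn(B,\kappa)$; this identification is where the bivariate assumption becomes essential (and is exactly the content of the relevant sections of \cite{BMW}), because the passage between the Horn presentation and the Euler--Koszul presentation uses that $B$ has precisely two columns, so $I(B)$ is a complete intersection of codimension two.

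The main obstacle I expect is the identification step just described. Writing down the correspondence between the operators in Definition~\ref{def:Horn} and the Euler--Koszul differentials requires tracking the monomial generators $x_i$ and $x_j$ appearing in the Andean primary components from Theorem~\ref{thm:component} and verifying that the non-holonomic obstruction to $\Horn(B,\kappa)$ being holonomic is precisely captured by these Andean contributions. Once this bookkeeping is done, the theorem follows from the primary-decomposition splitting described above.
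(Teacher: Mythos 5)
This result is quoted directly from Corollary~9.6 of~\cite{BMW}; the paper gives no proof of it, treating it as a black box to be combined with the Andean arrangement computation of Theorem~\ref{thm:Andean arrangement}. So there is no in-paper argument to compare your sketch against.

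That said, your outline is a reasonable bird's-eye view of the Euler--Koszul strategy used in~\cite{DMM} and~\cite{BMW}: splitting along a primary decomposition of $I(B)$, showing the Andean pieces contribute vanishing Euler--Koszul homology when the parameter misses the quasidegree set, showing the toral pieces are always holonomic, and assembling via long exact sequences. Your easy direction via Theorem~\ref{thm:finite rank} is also fine. The place your sketch correctly flags but does not actually resolve is the identification of the Horn $D$-module in the two variables $z_1,z_2$ with the Euler--Koszul homology of the binomial module in the $n$ variables $y_1,\dots,y_n$; the passage between these two presentations (the Gelfand--Graev--Kapranov--Zelevinsky change of variables sketched after Example~\ref{ex:Appell continued}) and the upgrade from finite rank to genuine holonomicity are exactly the substantive new content of~\cite{BMW}. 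You cannot shortcut this identification in the present paper because Theorem~\ref{thm:component} only describes the Andean components as ideals in $\CC[x]$; translating that description into statements about the characteristic variety of the Horn operators requires the machinery of~\cite{BMW}. Since the present paper's contribution is the explicit computation of the Andean arrangement (Theorem~\ref{thm:Andean arrangement}) and not a reproof of the $D$-module theory, citing~\cite{BMW} here is the appropriate choice, and replicating its proof would be out of scope.
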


As a consequence of Theorem~\ref{thm:component}, we are able to
explicitly compute the Andean arrangement of $I(B)$ for a codimension
two lattice basis ideal, thus providing a concrete description of the
set of parameters for which a Horn system in two variables is
holonomic (and has finite dimensional solution space). We first
compute the quasidegree set for a single Andean component of $I(B)$.

\begin{proposition}
\label{prop:qdeg one component}
Let $B$ be as in Convention~\ref{conv:AB}, and suppose
$b_i,b_j,\lambda, d$ are as in Theorem~\ref{thm:component}. 
Denote by $\mathscr{C}$ the component of $I(B)$ associated to
$\<x_i,x_j\>$. Write
$\lambda = p/q$ where $p$ and $q$ are relatively prime integers, and
assume that the vector with $i$th coordinate $p$, $j$th coordinate $q$
and all other coordinates equal to zero is the first row of $A$. Then
\[
\scriptsize{
\qdeg\left( \frac{\CC[x]}{\mathscr{C}} \right) = 
\left\{
\begin{array}{ll}
\big\{ \beta \in \CC^{n-2} \mid \beta_1 \in  \{0,1,\dots,
pb_{i1}+p b_{i2} -1\} \minus \{ p b_{i1}+p b_{i2}-p \} \; \big\}
& \text{if } d/q=1, \\ 
\big\{ \beta \in \CC^{n-2} \mid \beta_1 \in \{0,1,\dots, p b_{i1}+p b_{i2}-1
\} \; \big\}& \text{if } d/q>1.
\end{array}
\right.
}
\]
\end{proposition}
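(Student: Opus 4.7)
The plan is to transfer the computation of $\qdeg(\CC[x]/\mathscr{C})$ to a combinatorial problem about a linear form on the two-dimensional staircase of the monomial ideal from Theorem~\ref{thm:component}, and then analyse that form directly.

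First, I would use Theorem~\ref{thm:component} to identify $\deg(\CC[x]/\mathscr{C})$. Since the only monomials in $\mathscr{C}$ are those of
$\mathcal{M} := \langle x_j^{\lambda(b_{i1}+b_{i2}-d)},\, x_i^d x_j^{\lambda(b_{i1}+b_{i2}-2d)},\,\dots,\, x_i^{b_{i1}+b_{i2}-d}\rangle,$
a graded piece $(\CC[x]/\mathscr{C})_\alpha$ is nonzero precisely when there is $u\in\NN^n$ with $Au=\alpha$ and $(u_i,u_j)$ standard for $\mathcal{M}$; hence
$\deg(\CC[x]/\mathscr{C}) = \{Au : u\in\NN^n,\; (u_i,u_j)\text{ standard for } \mathcal{M}\}.$

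Second, I would decouple the first coordinate of $Au$ from the rest. By the assumption on the first row of $A$, this first coordinate equals $pu_i+qu_j$, depending only on $(u_i,u_j)$. For each fixed $(u_i,u_j)$ in the staircase, varying the remaining entries $u_k$ for $k \neq i,j$ over $\NN$ traces out a shift of the semigroup generated by the vectors $a_k' \in \ZZ^{n-3}$ obtained by dropping the first entry of $a_k$. A brief linear algebra argument---using that $A$ has rank $n-2$, the first row of $A$ is supported on positions $i,j$, and the columns of $A$ generate $\ZZ^{n-2}$ as a lattice---shows that the $a_k'$ span $\QQ^{n-3}$ and generate a full-dimensional cone; the lattice points of such a cone are Zariski dense in $\CC^{n-3}$. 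Combining these observations gives
\[
\qdeg(\CC[x]/\mathscr{C}) = \{\beta \in \CC^{n-2} : \beta_1 \in C\}, \qquad C = \{pu_i+qu_j : (u_i,u_j)\text{ standard for } \mathcal{M}\}.
\]

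The proof thus reduces to a combinatorial enumeration of $C$. Writing $N = (b_{i1}+b_{i2})/d$, the staircase decomposes as the disjoint union $\bigsqcup_{m=0}^{N-2}\bigl([md,(m+1)d) \times [0,\lambda(N-m-1)d)\bigr)$, and on each rectangular step I would enumerate the values of $pu_i+qu_j$. The key arithmetic input is that $q\mid d$ (forced by $\lambda b_{i1}\in\ZZ$ together with $\gcd(p,q)=1$), so within each step of width $d$ in the $u_i$-direction, $pu_i$ hits each residue class modulo $q$ exactly $d/q$ times. A case split on $d/q$ then produces the stated description: when $d/q>1$ every residue recurs within each step and the interval of attainable values fills completely up to the stated upper bound, whereas when $d/q=1$ a Frobenius-type obstruction at the boundary removes the single value $pb_{i1}+pb_{i2}-p$. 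The main obstacle is this final step---carefully tracking residues of $pu_i$ modulo $q$ across the steps and pinpointing the excluded value in the $d/q=1$ regime; the first two paragraphs are largely bookkeeping on top of Theorem~\ref{thm:component} together with a standard density statement for semigroup lattice points.
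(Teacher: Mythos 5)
Your first two paragraphs track the paper's proof step for step: the reduction of $\qdeg(\CC[x]/\mathscr{C})$ to the set $C=\{pu_i+qu_j : (u_i,u_j)\text{ standard for }\mathscr{M}\}$ is obtained, exactly as in the paper, by noting that the first row of $A$ is supported on $\{i,j\}$, that the columns $a_k$ for $k\neq i,j$ have zero first entry and rank $n-3$ (so for each fixed $(u_i,u_j)$ the points $Au$ are Zariski dense in the hyperplane $\beta_1=pu_i+qu_j$), and that a graded piece of $\CC[x]/\mathscr{C}$ vanishes if and only if every monomial of that degree lies in $\mathscr{M}$. Where you diverge from the paper is in the organization of the final enumeration: the paper fixes a level $\ell$ and asks whether the diagonal $\{pu_i+qu_j=\ell\}$ misses the staircase of $\mathscr{M}$ in $\NN^2$, whereas you decompose the staircase into the $N-1$ rectangular steps $[md,(m+1)d)\times[0,\lambda(N-m-1)d)$ and track residues of $pu_i$ modulo $q$ within each step. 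Either slicing is legitimate in principle; yours is not the paper's.

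The trouble is that this enumeration is the only substantive content of the proposition, and you defer it; moreover, the conclusion you announce does not fall out of your plan, and before carrying the plan through you should reconcile two concrete obstructions. First, the form $pu_i+qu_j$ attains its maximum over the staircase at the inner corners $((m+1)d-1,\ \lambda(N-m-1)d-1)$, and a direct substitution gives the same value $p(b_{i1}+b_{i2})-p-q$ at every such corner; this is strictly smaller than the claimed top endpoint $pb_{i1}+pb_{i2}-1$ except when $p=q=1$, so the asserted ``interval fills completely up to the stated upper bound'' cannot be a bookkeeping fact. Second, when $p,q\geq 2$ are coprime, there are small integers outside the numerical semigroup $\langle p,q\rangle$ that cannot be written as $pu_i+qu_j$ with $u_i,u_j\in\NN$ at all, so the claimed interval also develops gaps at the bottom. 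Your sketch (``every residue recurs within each step and the interval of attainable values fills completely'') engages neither issue; you should work out $C$ explicitly for a small example with $q>1$ or with $d>q=1$ and compare against the stated formula before trusting that the residue argument will close the gap. As it stands, the final step is a genuine hole, not a routine verification.
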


\begin{proof}
We start by justifying the assumption that the first row of $A$
consists of zeros except for the $i$th coordinate which equals $p$ and
the $j$th coordinate which equals $q$. We have a choice
of $A$ in Convention~\ref{conv:AB}: any $A$ whose rows are a basis of the left
kernel of $B$, and whose columns span $\ZZ^d$ as a lattice can be
used. Since the row we want is an element of the left kernel of $B$,
and its only two nonzero entries are relatively prime, we are allowed
to use it.

As $b_i$ and $b_j$ are linearly dependent, the
submatrix of $A$ with columns $a_k$ for $k\neq i,j$ has rank
$n-3$. Because the first row of this submatrix consists of all zeros,
we see that, fixing natural numbers $u_i,u_j$ and varying $u_k \in
\NN$ for $k\neq i, j$, the points $Au$ are Zariski dense in the set 
$\{ \beta \in \CC^{n-3} \mid \beta_1 = pu_1+qu_2\}$. 

Now note that the degree $\alpha$ graded piece
$(\CC[x]/\mathscr{C})_{\alpha}$ equals $0$ for $\alpha \in 
\ZZ^{n-2}$ if and only if $\CC[x]_\alpha \subseteq \mathscr{C}$; also
$\CC[x]_\alpha$ is spanned by monomials, and the only monomials in
$\mathscr{C}$ are those in
$\mathscr{M} = \< x_j^{\lambda(b_{i1} + b_{i2} - d)}, x_i^{d}x_j^{\lambda(b_{i1} +
  b_{i2}  - 2d)},\dots, x_i^{b_{i1} + b_{i2}  -d} \>$. 

Thus,
\[
\qdeg(\CC[x]/\mathscr{C}) = 
\big\{ \beta \in
\CC^{n-3} \mid \beta_1 \in \{ pu_i+qu_j \mid u_i, u_j \in \NN \text{
  and } x_i^{u_i}x_j^{u_j}
\not \in \mathscr{M} \} \big\} .
\]

For fixed $\ell$, consider the line $L_{\ell}  = \{ (w,z) \mid
pu_i+qu_j =  \ell\}$. Given $\ell \in \NN$;
we wish to know wether or not all $(u_i,u_j) \in \NN^2 \cap L_\ell$
are exponent vectors of monomials in $\mathscr{M}$. By construction of
$\mathscr{M}$ this occurs if $ \ell \geq pb_{i1} + qb_{i2}$. If
$\ell < pb_{i1} + qb_{i2}$, then $L_\ell$ always contains an exponent vector
of a monomial \emph{not} in $\mathscr{M}$, unless $d=q$, in which case
the intersection $\NN^2 \cap L_{p(b_{i1}+b_{i2}-1)}$ consists of
exponent vectors of monomials in $\mathscr{M}$.
\end{proof}

\begin{theorem}
\label{thm:Andean arrangement}
Let $B$ be as in Convention~\ref{conv:AB}, and let 
\[
\mathscr{S} = \{ \{i,j\} \mid \text{ the rows } b_i \text{ and } b_j \text{
  of } B \text{ are linearly dependent in opposite open quadrants }\}.
\]
For each $\sigma = \{i,j\} \in \mathscr{S}$, assume $i<j$ and let
$\lambda_{\sigma} = -b_{j1}/b_{i1} = -b_{j2}/b_{i2}$. Write
$ \lambda_\sigma = p_\sigma/q_\sigma$ for $p_\sigma, q_\sigma \in \NN$
relatively prime. We can find $h_\sigma \in \QQ^{n-2}$ such that 
$h_\sigma A$ has $i$th coordinate $p_\sigma$, $j$th coordinate
$q_\sigma$, and all other entries equal to zero.
The Andean arrangement of $I(B)$ equals
\[\scriptsize{
\bigcup_{\sigma =\{i,j\}\in \mathscr{S}} 
\left\{ 
\beta \in \CC^{n-2} \left| 
h_\sigma \cdot \beta \in \left\{
\begin{array}{ll}
\{0,1,\dots, p_\sigma |b_{i1}|+p_\sigma |b_{i2}|-1 \} \minus \{
p_\sigma |b_{i1}|+p_\sigma |b_{i2}| - p_\sigma\} &
\text{if } d_\sigma/q_\sigma=1,\\
\{ 0,1,\dots, p_\sigma |b_{i1}|+p_\sigma |b_{i2}|-1 \} & \text{if } d_\sigma/q_\sigma>1.
\end{array}
\right.
\right.
\right\}.
}
\]
\end{theorem}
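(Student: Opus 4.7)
The plan is to compute $\qdeg(\CC[x]/\mathscr{C}_\sigma)$ for each Andean primary component $\mathscr{C}_\sigma$ of $I(B)$ and then take the union. By Example~\ref{ex:codim2 toral Andean}, the Andean associated primes of $I(B)$ are precisely $\langle x_i, x_j\rangle$ for $\sigma = \{i,j\} \in \mathscr{S}$, so by Definition~\ref{def:qdeg - Andean} the Andean arrangement equals $\bigcup_{\sigma \in \mathscr{S}} \qdeg(\CC[x]/\mathscr{C}_\sigma)$. I would first verify the existence of $h_\sigma$: the vector $p_\sigma e_i + q_\sigma e_j \in \ZZ^n$ lies in the left kernel of $B$ by definition of $\lambda_\sigma$, and since the rows of $A$ are a $\QQ$-basis of that left kernel, a unique $h_\sigma \in \QQ^{n-2}$ with $h_\sigma A = p_\sigma e_i + q_\sigma e_j$ exists.

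Next, for a fixed $\sigma$, after the harmless rescaling that puts us in the setup of Theorem~\ref{thm:component}, the monomials in $\mathscr{C}_\sigma$ form the monomial ideal
\[
\mathscr{M}_\sigma = \langle x_j^{\lambda_\sigma(|b_{i1}|+|b_{i2}|-d_\sigma)}, x_i^{d_\sigma}x_j^{\lambda_\sigma(|b_{i1}|+|b_{i2}|-2d_\sigma)}, \dots, x_i^{|b_{i1}|+|b_{i2}|-d_\sigma}\rangle.
\]
Therefore $\qdeg(\CC[x]/\mathscr{C}_\sigma)$ is the Zariski closure of $\{Au : u \in \NN^n,\ x_i^{u_i}x_j^{u_j}\notin \mathscr{M}_\sigma\}$.

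The main computation is a fibered analysis. Fix $(u_i, u_j) \in \NN^2$ with $x_i^{u_i}x_j^{u_j} \notin \mathscr{M}_\sigma$ and let the other coordinates of $u$ vary freely over $\NN$. Since $h_\sigma \cdot a_k = 0$ for $k \neq i,j$ and $h_\sigma \cdot (u_i a_i + u_j a_j) = p_\sigma u_i + q_\sigma u_j$, every point $Au$ in this fiber lies on the affine hyperplane $H(u_i, u_j) = \{\beta \in \CC^{n-2} : h_\sigma \cdot \beta = p_\sigma u_i + q_\sigma u_j\}$. Moreover, the columns $a_k$ for $k \neq i,j$ have $\QQ$-rank $n-3$ (using the submatrix rank argument from Proposition~\ref{prop:qdeg one component}), so their $\NN$-span is Zariski dense in $\ker(h_\sigma)$, and the Zariski closure of the fiber is exactly $H(u_i, u_j)$. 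Taking the union over admissible $(u_i, u_j)$ yields
\[
\qdeg(\CC[x]/\mathscr{C}_\sigma) = \{\beta \in \CC^{n-2} : h_\sigma \cdot \beta \in S_\sigma\},
\]
where $S_\sigma = \{p_\sigma u_i + q_\sigma u_j : x_i^{u_i}x_j^{u_j} \notin \mathscr{M}_\sigma\}$. Re-running the lattice-point count from the proof of Proposition~\ref{prop:qdeg one component} then identifies $S_\sigma$ with the explicit finite set in the two cases $d_\sigma/q_\sigma = 1$ and $d_\sigma/q_\sigma > 1$. Unioning over $\sigma \in \mathscr{S}$ completes the proof.

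The main obstacle is the passage from the special shape of $A$ used in Proposition~\ref{prop:qdeg one component} (whose first row coincides with $p e_i + q e_j$) to the general matrix $A$ of Convention~\ref{conv:AB}. The right conceptual replacement is the linear functional $h_\sigma$, and the real content lies in showing that the Zariski closure of the $\NN$-span of $\{a_k : k \neq i,j\}$ is exactly $\ker(h_\sigma)$, so that the fibered Zariski closures are the hyperplanes $H(u_i, u_j)$ rather than something larger or smaller. Once this is in place, the remainder amounts to identifying $S_\sigma$, which is essentially extracted from the final paragraphs of the proof of Proposition~\ref{prop:qdeg one component}.
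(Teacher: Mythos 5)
The paper does not print a proof of Theorem~\ref{thm:Andean arrangement}: it is presented as an immediate consequence of Proposition~\ref{prop:qdeg one component} (combined with Example~\ref{ex:codim2 toral Andean}, which identifies the Andean primes as exactly the $\langle x_i,x_j\rangle$ for $\sigma\in\mathscr{S}$). Your proposal reconstructs precisely that derivation, and it is correct; in particular you correctly isolate the one nontrivial point that the paper glosses over, namely that Proposition~\ref{prop:qdeg one component} was proved under a normalization of $A$ (its first row equals $p_\sigma e_i+q_\sigma e_j$) and must be transported to an arbitrary $A$ via the functional $h_\sigma$. Your resolution is the right one: $h_\sigma$ exists because $p_\sigma e_i+q_\sigma e_j$ lies in the left kernel of $B$ and the rows of $A$ span that kernel; $h_\sigma\cdot a_k=0$ for $k\neq i,j$; the columns $\{a_k:k\neq i,j\}$ have rank $n-3$ (because the subspace of the left kernel of $B$ supported on $\{i,j\}$ is exactly one-dimensional, $b_i,b_j$ being dependent and nonzero), so their $\CC$-span equals $\ker h_\sigma$; and the $\NN$-span of a spanning set is Zariski dense in the $\CC$-span, so each fiber over a fixed $(u_i,u_j)$ closes up to the affine hyperplane $h_\sigma\cdot\beta=p_\sigma u_i+q_\sigma u_j$. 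Since $S_\sigma$ is finite (admissible $(u_i,u_j)$ are bounded coordinatewise by $|b_{i1}|+|b_{i2}|-d_\sigma$), the union of those hyperplanes is closed and the quasidegree set is $\{\beta: h_\sigma\cdot\beta\in S_\sigma\}$. The remaining identification of $S_\sigma$ is exactly Proposition~\ref{prop:qdeg one component}, as you say. So this is the paper's argument, carried out explicitly.
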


\begin{example}
\label{ex:Appell continued}
We apply the previous result to the case Appell $F_1$, and compute the
Andean arrangement of $I(B)$ for 
$B = \left[ \begin{smallmatrix} 
\phantom{-} 1 & \phantom{-} 1 \\
-1 & -1 \\ 
\phantom{-} 1 & \phantom{-} 0 \\ 
\phantom{-} 0 & \phantom{-} 1 \\ 
-1 & \phantom{-} 0 \\ 
\phantom{-} 0 &-1 \end{smallmatrix}\right]$. 
Only the first two rows of $B$ are
linearly dependent in opposite open quadrants of $\ZZ^2$, and we can
choose 
$A = \left[ \begin{smallmatrix} 
1 & 1 & 0 & 0 & 0 & 0 \\
0 & 0 & 1 & 0 & 1 & 0 \\ 
0 & 0 & 0 & 1 & 0 & 1 \\ 
1 & 0 & 0 & 0 & 1 &1 \end{smallmatrix}\right]$. The Andean arrangement
of $I(B)$ equals 
\[
\{ \beta \in \CC^4 \mid \beta_1 \in \{0, 1\} \minus \{1\} \} =\{ \beta
\in \CC^4 \mid \beta_1=0 \}.
\]
By Theorems~\ref{thm:finite rank} and~\ref{thm:holonomic}, the system
$\Horn(B,\kappa)$ for $\kappa = (\gamma-1,\alpha,0,0,-\beta,-\beta')$
is holonomic if and only if it has a finite dimensional solution
space, if and only if $\beta_1 = \gamma-1 - \alpha = 0$, as we claimed
in Example~\ref{ex:Appell}.
\end{example}

At first glance, there is no obvious relationship between the Horn system
$\Horn(B,\kappa)$ and the lattice basis ideal $I(B)$, so it is unclear
why the Andean arrangement of $I(B)$ influences the behavior of
$\Horn(B,\kappa)$. The key idea is due to Gelfand, Graev, Kapranov and
Zelevinsky, who introduced a system
\[
\begin{split}
\frac{\del^{|u_+|} \psi(y_1,\dots,y_n)}{\del y_1^{(u_+)_1}\cdots
    \del y_n^{(u_+)_n}}  = 
\frac{\del^{|u_-|} \psi(y_1,\dots,y_n)}{\del y_1^{(u_-)_1}\cdots
    \del y_n^{(u_-)_n}} \hspace{1.1cm} & \text{ for } u \text{ a column
    of } B, \\
 \sum_{j=1}^n a_{ij} \frac{\psi(y_1,\dots,y_n)}{\del y_j}   = (A\kappa)_i
 \psi(y_1,\dots,y_n)  \hspace{1cm} & \text{ for } i=1,\dots n-2,
\end{split}
\]
denoted $H(B,A\kappa)$, 
and noticed that if $\varphi(z_1,z_2)$ is a solution of
$\Horn(B,\kappa)$ then $y^\kappa\varphi(y^{B_1},y^{B_2})$ is a
solution of $H(B,A\kappa)$ (recall that $B_1$ and $B_2$ are the
columns of $B$). Since $H(B,A\kappa)$ contains the generators of the
lattice basis ideal $I(B)$ as differential equations, it is
natural that the behavior of $I(B)$ should impact $H(B,A\kappa)$.
We illustrate this mechanism in an example.

\begin{example}
\label{ex: non holonomic}
Let $B=\left[ \begin{smallmatrix} \phantom{-}2 & \phantom{-}4 \\ -2 &
    -4 \\ \phantom{-}1 & \phantom{-}1 \end{smallmatrix} \right]$. We
use $A=\left[ \begin{smallmatrix} 1 & 1 & 0 \end{smallmatrix}\right]$.
The
Andean arrangement of $I(B)$ equals the quasidegree set of the
component corresponding to the associated prime $\<x_1,x_2\>$. In this
case, $\lambda=1=p=q$, and $\gcd(2/q,4/q)=2$, so we know that $A\kappa
= 2+4-2=4$ implies that $\Horn(B,\kappa)$ is not holonomic, and has
infinite dimensional solution space. Let us exhibit infinitely many
linearly independent solutions for $\Horn(B,(1,3,0))$.

It is easier to produce solutions if we start by considering
$H(B,4)$. Intuitively, the vertices in a connected component of
$\sG(I(B))$ that lies in the $\ell$th slice graph of $I(B)$, 
are the monomials appearing in a series expansion of a
solution of $H(B,A \cdot (\lambda \ell, 0,0) ) = H(B,\ell)$. Since
their monomial sets are disjoint, solutions arising from different
connected components are linearly independent.
Each slice graph contains infinitely many vertices, so the only way
this method produces a finite dimensional solution space is if the
connected components contain infinitely many vertices (and thus give
rise to solutions that are infinite series). If the connected
components of a given slice graph are all finite, there must be
infinitely many of them, and this provides an infinite dimensional
subspace of the solution space of $H(B,\ell)$.

More concretely, for $\ell = 4$ and any $k \in \NN$, $\{ (1,3,k),
(3,1,k+1)\}$ is a connected component of $\sG_{S(4)}(I(B))$. We
propose a solution of $H(B,4)$ which is a linear combination of $y_1
y_2^3 y_3^k$ and $y_1^3 y_2 y_3^{k+1}$. Solving for the coefficients,
we see that $f_k(y_1,y_2,y_3) = (k+1) y_1 y_2^3 y_3^k + y_1^3 y_2
y_3^{k+1}$ is a solution of $H(B,4)$.

We know that if we can write $f(y_1,y_2,y_3) = y_1 y_2^3 F(y_1^2y_3/y_2^2,
y_1^4 y_3/y_2^4)$, and $f(y_1,y_2,y_3)$ is a solution of $H(B,4)$,
then $F(z_1,z_2)$ is a solution of $\Horn(B,(1,3,0))$. Using the
expression $y_3 =
(y_1^2y_3/y_2^2)^2/(y_1^4y_3/y_2^4)$, we see that  $f_k(y_1,y_2,y_3) =
y_1 y_2^3 F_k(y_1^2y_3/y_2^2, y_1^4 y_3/y_2^4)$, where $F_k(z_1,z_2) =
(k+1)(z_1^2/z_2)^k(1+z_1)$. The functions $F_k(z_1,z_2)$ span an
infinite dimensional subspace of the solution space of
$\Horn(B,(1,3,0))$.
Note that in this example, there is no obvious common factor in the
equations $\Horn(B,(1,3,0))$, as was the case for the Appell system $F_1$.
\end{example}

\end{document}